\numberwithin{equation}{section}
\newtheorem{thm}{Theorem}[section]
\newtheorem{lmm}[thm]{Lemma}
\newtheorem{prp}[thm]{Proposition}
\theoremstyle{definition}
\newtheorem{dfn}[thm]{Definition}
\theoremstyle{remark}
\newtheorem{rem}{Remark}
\newcommand{\R}{\mathbb{R}}
\newcommand{\p}{\partial}
\newcommand{\eps}{\varepsilon}
\title{Strong instability of standing waves \\
for nonlinear Schr\"{o}dinger equations \\
with a delta potential}
\author{Masahito \textsc{Ohta}\footnote{
Department of Mathematics, Tokyo University of Science, 
1-3 Kagurazaka, Shinjuku-ku, Tokyo 162-8601, Japan. 
\newline e-mail: \texttt{mohta@rs.tus.ac.jp}}
~and Takahiro \textsc{Yamaguchi}\footnote{
Department of Mathematics, Tokyo University of Science, 
1-3 Kagurazaka, Shinjuku-ku, Tokyo 162-8601, Japan.
\endgraf e-mail: \texttt{1113621@ed.tus.ac.jp}}}
\date{}
\begin{document}

\maketitle

\begin{abstract}    
We study strong instability (instability by blowup) 
of standing wave solutions for a nonlinear Schr\"{o}dinger equation 
with an attractive delta potential 
and $L^2$-supercritical power nonlinearity in one space dimension. 
We also compare our sufficient condition on strong instability 
with some known results on orbital instability.  
\end{abstract}

\section{Introduction}

In our previous paper \cite{OY}, 
we studied the strong instability (instability by blowup) 
of standing wave solutions 
$e^{i\omega t}\phi_{\omega}(x)$ 
for the following nonlinear Schr\"{o}dinger equation 
with double power nonlinearity: 
\begin{align}\label{nls0}
i\p_t u=-\Delta u-a|u|^{p-1}u-b|u|^{q-1}u,
\quad (t,x)\in \R\times \R^N, 
\end{align}
where $a$ and $b$ are positive constants, 
$1<p<1+4/N<q<2^*-1$. 
Here, $2^*$ is defined by $2^*=2N/(N-2)$ if $N\ge 3$, 
and $2^*=\infty$ if $N=1,2$. 
For ground states $\phi_{\omega}$, 
we proved that the standing wave solution 
$e^{i\omega t}\phi_{\omega}(x)$ of \eqref{nls0} is strongly unstable 
for sufficiently large $\omega$. 
Moreover, we announced in \cite{OY} that our method of proof is not 
restricted to the double power case \eqref{nls0}, 
but is also applicable to other type of nonlinear Schr\"odinger equations. 

In this paper,  
we consider the following nonlinear Schr\"{o}dinger equation 
with a delta potential in one space dimension: 
\begin{align}\label{nls}
i\p_t u=-\p_x^2 u-\gamma \delta (x)u-|u|^{p-1}u,
\quad(t,x) \in \R \times \R,
\end{align}
where $\gamma\in \R$ is a constant, 
$\delta(x)$ is the delta measure at the origin, and $1<p<\infty$. 
The equations of the form \eqref{nls} arise in a wide variety of physical models 
with a point defect on the line, 
and have been studied by many authors 
(see, e.g., \cite{FJ, FOO, GHW, HMZ1, HMZ2, HZ, KO, LFF} and references therein). 

We study the strong instability of standing wave solutions 
$e^{i \omega t}\phi_{\omega}(x)$ of \eqref{nls}, 
where $\omega>\gamma^2/4$, and 
\begin{align}\label{for}
\phi_{\omega}(x)=
\bigg\{\frac{(p+1)\omega}{2}{\rm sech}^2
\left(\frac{(p-1)\sqrt{\omega}}{2}|x|+\tanh^{-1}
\left(\frac{\gamma}{2\sqrt{\omega}}\right) \right) 
\bigg\}^{\frac{1}{p-1}}, 
\end{align}
which is a unique positive solution of
\begin{align}\label{sp}
-\p_x^2\phi +\omega \phi -\gamma \delta(x)\phi-|\phi|^{p-1}\phi=0, \quad x\in \R. 
\end{align}

The well-posedness of the Cauchy problem for \eqref{nls} in the energy space $H^1(\R)$ 
follows from an abstract result in Cazenave \cite{caz}
(see Theorem 3.7.1 and Corollary 3.3.11 in \cite{caz}, 
and also Section 2 of \cite{FOO}). 

\begin{prp}
For any $u_0\in H^1(\R)$ there exist $T_{\max}=T_{\max}(u_0) \in  (0,\infty]$ 
and a unique solution
$u\in C([0,T_{\max}),H^1(\R))$ with $u(0)=u_0$ such that either $T_{\max}=\infty$ (global existence)
or $T_{\max}<\infty$ and $\displaystyle\lim_{t\to T_{\max}}\| \p_x u(t)\|_{L^2}=\infty$ (finite time blowup).
Furthermore, the solution $u(t)$ satisfies
\begin{align}\label{conservation}
E(u(t))=E(u_0), \quad \|u(t)\|_{L^2}^2=\|u_0\|_{L^2}^2
\end{align}
for all $t\in [0,T_{\max})$, 
where the energy $E$ is defined by
\begin{align*}
E(v)=\frac{1}{2}\|\p_x v\|_{L^2}^2-\frac{\gamma}{2}|v(0)|^2
-\frac{1}{p+1}\| v\|_{L^{p+1}}^{p+1}.
\end{align*}
\end{prp}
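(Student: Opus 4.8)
The plan is to realize \eqref{nls} as an abstract semilinear Schr\"odinger equation $i\p_t u = H_\gamma u + g(u)$ on $L^2(\R)$ and to invoke the general local well-posedness theory of Cazenave \cite{caz}. First I would give rigorous meaning to the linear operator $H_\gamma = -\p_x^2 - \gamma\delta(x)$. Since in one dimension $H^1(\R)$ embeds continuously into $L^\infty(\R)$, the trace $v\mapsto v(0)$ is well defined and continuous on $H^1(\R)$; moreover the one-dimensional Gagliardo--Nirenberg inequality $|v(0)|^2 \le \|v\|_{L^\infty}^2 \le \|v\|_{L^2}\|\p_x v\|_{L^2}$ combined with Young's inequality shows that for every $\eps>0$ there is $C_\eps>0$ with
\begin{align*}
|v(0)|^2 \le \eps \|\p_x v\|_{L^2}^2 + C_\eps \|v\|_{L^2}^2.
\end{align*}
Consequently the quadratic form $q_\gamma(v) = \|\p_x v\|_{L^2}^2 - \gamma|v(0)|^2$ with form domain $H^1(\R)$ is closed and bounded below, so by the KLMN theorem it defines a unique self-adjoint operator $H_\gamma$ on $L^2(\R)$ that is bounded below and whose form domain is exactly $H^1(\R)$. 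By Stone's theorem $-iH_\gamma$ generates a unitary group $\{e^{-itH_\gamma}\}_{t\in\R}$ on $L^2(\R)$, and since this group leaves the form domain invariant it extends to a group of isomorphisms of $H^1(\R)$ (equipped with the equivalent norm $q_\gamma(v)+M\|v\|_{L^2}^2$ for $M$ large).

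Next I would verify the structural hypotheses on the nonlinearity required by Theorem 3.7.1 of \cite{caz}. Writing $g(u) = -|u|^{p-1}u$, the embedding $H^1(\R)\hookrightarrow L^\infty(\R)$ makes $g$ a locally Lipschitz map from $H^1(\R)$ into $H^{-1}(\R)$ (indeed into $L^2(\R)$ on bounded sets, since $\||u|^{p-1}u\|_{L^2}\le \|u\|_{L^\infty}^{p-1}\|u\|_{L^2}$), and $g$ is the gradient of the $C^1$ functional $v\mapsto \frac{1}{p+1}\|v\|_{L^{p+1}}^{p+1}$. Together with the unitary group generated by $-iH_\gamma$, these are precisely the ingredients of Cazenave's abstract framework, in which the energy space coincides with the form domain $H^1(\R)$. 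Applying Theorem 3.7.1 of \cite{caz} yields, for each $u_0\in H^1(\R)$, a maximal existence time $T_{\max}\in(0,\infty]$ and a unique solution $u\in C([0,T_{\max}),H^1(\R))$ with $u(0)=u_0$, together with the blowup alternative: if $T_{\max}<\infty$ then $\|u(t)\|_{H^1}\to\infty$ as $t\to T_{\max}$. Because $\|u(t)\|_{L^2}$ is conserved (see below), the divergence of $\|u(t)\|_{H^1}$ is equivalent to $\|\p_x u(t)\|_{L^2}\to\infty$, which is the stated form of the alternative.

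Finally I would establish the conservation laws \eqref{conservation}. The energy $E$ is well defined and of class $C^1$ on $H^1(\R)$ precisely because the trace estimate above makes $v\mapsto|v(0)|^2$ continuous on $H^1(\R)$; it is the Hamiltonian associated with \eqref{nls}, hence formally conserved along the flow, and $\|u\|_{L^2}^2$ is the charge associated with the phase invariance $u\mapsto e^{i\theta}u$. To make the conservation rigorous I would invoke Corollary 3.3.11 of \cite{caz}: approximating $u_0$ by data in $D(H_\gamma)$ produces strong solutions for which $\frac{d}{dt}E(u(t))=0$ and $\frac{d}{dt}\|u(t)\|_{L^2}^2=0$ can be verified by direct differentiation, and the identities then pass to the limit by the continuous dependence supplied by Theorem 3.7.1.

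The step I expect to require the most care is the treatment of the singular linear term: one must check that the delta interaction genuinely fits the abstract framework, that is, that $H_\gamma$ is self-adjoint with form domain $H^1(\R)$ and that the associated group acts on the energy space rather than merely on $L^2(\R)$. Once the trace inequality is in hand this reduces to the standard KLMN and Stone arguments, but it is the place where the presence of the delta potential, as opposed to a smooth bounded potential, must be accounted for.
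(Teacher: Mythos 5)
Your proposal is correct and follows essentially the same route as the paper: the paper gives no proof of its own, simply citing Theorem 3.7.1 and Corollary 3.3.11 of \cite{caz} together with Section 2 of \cite{FOO}, which is exactly the machinery you invoke. The details you supply --- the trace inequality, the KLMN construction of $H_\gamma$ with form domain $H^1(\R)$, and the verification of the hypotheses on the nonlinearity --- are precisely the standard steps (carried out in \cite{FOO}) that make this citation rigorous.
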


Here, we give the definitions of stability and instability of standing waves. 

\begin{dfn}
We say that the standing wave solution $e^{i\omega t}\phi_\omega$ of \eqref{nls} 
is {\em orbitally stable} if for any $\eps>0$ there exists $\delta>0$ such that 
if $\|u_0-\phi_\omega \|_{H^1}<\delta$, 
then the solution $u(t)$ of \eqref{nls} with $u(0)=u_0$ exists globally and satisfies 
$$\sup_{t \geq 0}\inf_{\theta \in \R}\|u(t)-e^{i\theta}\phi_{\omega}\|_{H^1}<\eps.$$
Otherwise, $e^{i\omega t}\phi_{\omega}$ is said to be {\em orbitally unstable}.
\end{dfn}

\begin{dfn}
We say that $e^{i\omega t}\phi_\omega$ is {\em strongly unstable} 
if for any $\eps>0$ there exists $u_0 \in H^1(\R)$ 
such that $\|u_0-\phi_{\omega}\|_{H^1}<\eps$ and 
the solution $u(t)$ of \eqref{nls} with $u(0)=u_0$ blows up in finite time.
\end{dfn}

Before we state our main result, 
we recall some known results. 
First, we consider the nonlinear Schr\"odinger equation without potential: 
\begin{align}\label{nls1}
i\p_t u=-\p_x^2 u-|u|^{p-1}u,  
\quad (t,x)\in \R \times \R. 
\end{align}
Let $1<p<\infty$, $\omega>0$ and 
\begin{align*}
\varphi_{\omega}(x)=\bigg\{ \frac{(p+1)\omega}{2}{\rm sech}^2
\left(\frac{(p-1)\sqrt{\omega}}{2} x\right)  \bigg\}^{\frac{1}{p-1}}. 
\end{align*}
When $1<p<5$, the standing wave solution $e^{i\omega t}\varphi_{\omega}$ of \eqref{nls1}
is orbitally stable for all $\omega>0$ (see \cite{CL}). 
When $p\ge 5$, 
$e^{i\omega t}\varphi_{\omega}$ is strongly unstable for all $\omega>0$ 
(see \cite{BC} and also \cite{caz}). 

Next, we consider the attractive potential case $\gamma>0$ in \eqref{nls}, 
which was first studied by Goodman, Holmes and Weinstein \cite{GHW} 
for the case $p=3$, 
and then by Fukuizumi, Ohta and Ozawa \cite{FOO} for $1<p<\infty$. 
The following is proved in \cite{FOO}. 

\begin{prp}[\cite{FOO}]\label{prp2}
Let $\gamma>0$ and $\omega>\gamma^2/4$.  
\begin{itemize}
\item[{\rm (i)}] 
When $1<p\le 5$, 
the standing wave solution $e^{i \omega t}\phi_{\omega}$ of \eqref{nls} 
is orbitally stable for any $\omega \in (\gamma^2/4,\infty)$. 
\item[{\rm (ii)}] 
When $p>5$, 
there exists $\omega_0=\omega_0(p,\gamma) \in (\gamma^2/4,\infty)$  such that 
the standing wave solution $e^{i \omega t}\phi_{\omega}$ of \eqref{nls} 
is orbitally stable for any $\omega \in (\gamma^2/4,\omega_0)$, 
and it is orbitally unstable for any $\omega \in (\omega_0,\infty)$. 
Here, $\omega_0(p,\gamma)=\gamma^2/[4\xi_0(p)^2]$ 
and $\xi_0(p)\in (0,1)$ is a unique solution of 
\begin{equation}\label{xi0}
\frac{p-5}{p-1} \int_{\xi}^{1}(1-s^2)^{\frac{2}{p-1}-1}\,ds
=\xi\,(1-\xi^2)^{\frac{2}{p-1}-1} \quad (0<\xi<1). 
\end{equation}
\end{itemize}
\end{prp}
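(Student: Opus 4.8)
The natural framework is the orbital stability theory of Grillakis, Shatah and Strauss. I would write the action $S_\omega(v)=E(v)+\frac{\omega}{2}\|v\|_{L^2}^2$, so that $\phi_\omega$ is a critical point of $S_\omega$ and $d(\omega):=S_\omega(\phi_\omega)$ satisfies $d'(\omega)=\frac12\|\phi_\omega\|_{L^2}^2$; the explicit formula \eqref{for} makes $\omega\mapsto\phi_\omega$ manifestly $C^1$ into $H^1(\R)$, so $d$ is $C^2$. The GSS criterion then reduces stability to the sign of $d''(\omega)=\frac12\frac{d}{d\omega}\|\phi_\omega\|_{L^2}^2$, provided the linearized operators $L_+=-\p_x^2+\omega-\gamma\delta(x)-p\phi_\omega^{p-1}$ and $L_-=-\p_x^2+\omega-\gamma\delta(x)-\phi_\omega^{p-1}$ satisfy the standard spectral hypotheses: $L_-\ge 0$ with $\ker L_-=\mathrm{span}\{\phi_\omega\}$, and $L_+$ has exactly one negative eigenvalue and trivial kernel. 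Since $\phi_\omega>0$ solves $L_-\phi_\omega=0$ it is the ground state of $L_-$, giving the $L_-$ part at once; granting the $L_+$ part, the number of unstable directions of the linearized Hamiltonian is $1$, and GSS yields orbital stability when $d''(\omega)>0$ and orbital instability when $d''(\omega)<0$.

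The core computation is therefore the sign of $M'(\omega)$, where $M(\omega):=\|\phi_\omega\|_{L^2}^2$. Using \eqref{for} I would substitute $y=\frac{(p-1)\sqrt{\omega}}{2}|x|+\tanh^{-1}(\gamma/(2\sqrt\omega))$ and then $s=\tanh y$ to obtain
\[
M(\omega)=C(p)\,\omega^{\frac{2}{p-1}-\frac12}\int_{\xi}^{1}(1-s^2)^{\frac{2}{p-1}-1}\,ds,
\qquad \xi:=\frac{\gamma}{2\sqrt{\omega}}\in(0,1),
\]
with $C(p)>0$. Since $\omega=\gamma^2/(4\xi^2)$ is strictly decreasing in $\xi$, the sign of $M'(\omega)$ is opposite to that of $\frac{d}{d\xi}M$, and a direct differentiation shows $\frac{d}{d\xi}M=C(p)\,\xi^{-4/(p-1)}\,F(\xi)$ up to a positive factor, where
\[
F(\xi):=\frac{p-5}{p-1}\int_{\xi}^{1}(1-s^2)^{\frac{2}{p-1}-1}\,ds-\xi\,(1-\xi^2)^{\frac{2}{p-1}-1}.
\]
Thus $\mathrm{sign}\,M'(\omega)=-\,\mathrm{sign}\,F(\xi)$, and $F(\xi)=0$ is exactly equation \eqref{xi0}.

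It remains to analyze $F$. When $1<p\le 5$ the coefficient $(p-5)/(p-1)\le 0$, so $F(\xi)<0$ for every $\xi\in(0,1)$; hence $M'(\omega)>0$ throughout, which gives (i). When $p>5$ I would differentiate, and the essential simplification is that the two contributions collapse to
\[
F'(\xi)=-\frac{2(p-3)}{p-1}\,(1-\xi^2)^{\frac{2}{p-1}-2}<0 ,
\]
so $F$ is strictly decreasing on $(0,1)$. Because $F(0^+)=\frac{p-5}{p-1}\int_0^1(1-s^2)^{\frac{2}{p-1}-1}\,ds>0$ while $F(\xi)\to-\infty$ as $\xi\to 1^-$ (here $\frac{2}{p-1}-1<0$ since $p>5$), there is a unique zero $\xi_0\in(0,1)$, with $F>0$ on $(0,\xi_0)$ and $F<0$ on $(\xi_0,1)$. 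Transporting through $\omega=\gamma^2/(4\xi^2)$: for $\xi\in(\xi_0,1)$, i.e. $\omega\in(\gamma^2/4,\omega_0)$ with $\omega_0=\gamma^2/(4\xi_0^2)$, one has $M'(\omega)>0$ and stability, whereas for $\xi\in(0,\xi_0)$, i.e. $\omega>\omega_0$, one has $M'(\omega)<0$ and instability; this is (ii).

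The main obstacle is not this monotonicity analysis but the verification of the GSS spectral hypothesis on $L_+$. Because the attractive delta potential forces $\phi_\omega$ to have a corner at the origin — the operators act on $H^1$ functions subject to the interface condition $\p_x v(0^+)-\p_x v(0^-)=-\gamma\,v(0)$ — the usual identification of a kernel element of $L_+$ through translation invariance is unavailable, since translation symmetry is broken. One must instead determine the Morse index and the kernel of $L_+$ directly, for instance by a Sturm--Liouville/ODE comparison argument adapted to the jump at $x=0$, to confirm that $L_+$ has exactly one negative eigenvalue and no kernel. Finally, the borderline value $\omega=\omega_0$, where $d''(\omega_0)=0$, is degenerate and is excluded from the statement.
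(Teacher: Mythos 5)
Your proposal is correct and follows essentially the same route as the paper's own justification (Remark 1): the Grillakis--Shatah--Strauss slope criterion reduces everything to the sign of $\partial_\omega\|\phi_\omega\|_{L^2}^2$, which the explicit formula \eqref{for} converts into the sign of a function of $\xi=\gamma/(2\sqrt{\omega})$ whose zero defines $\xi_0(p)$ in \eqref{xi0}. Your additional computation $F'(\xi)=-\tfrac{2(p-3)}{p-1}(1-\xi^2)^{\frac{2}{p-1}-2}<0$, which the paper leaves implicit in its chain of equivalences, correctly pins down the uniqueness of $\xi_0$ and the direction of the inequalities; and, like the paper, you defer the verification of the spectral hypotheses on $L_{\pm}$ to the literature (\cite{FOO}), which is where that part of the argument actually resides.
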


\begin{rem}\label{rem1}
To prove Proposition \ref{prp2}, 
the following sufficient conditions for orbital stability and instability are used 
(see \cite{GSS1, GSS2, sha, SS, wei}). 

Let $p>1$, $\gamma>0$ and $\omega>\gamma^2/4$.  
\begin{enumerate}
\item[(i)]
If $\partial_{\omega}\|\phi_{\omega}\|_{L^2}^{2}>0$ at $\omega=\hat \omega$, 
then $e^{i \hat \omega t}\phi_{\hat \omega}$ is orbitally stable. 
\item[(ii)] 
If $\partial_{\omega}\|\phi_{\omega}\|_{L^2}^{2}<0$ at $\omega=\hat \omega$, 
then $e^{i\hat \omega t}\phi_{\hat \omega}$  is orbitally unstable. 
\end{enumerate}

By the formula \eqref{for}, we have 
\begin{align*}
\|\phi_{\omega}\|_{L^2}^{2}
&=2\int_{0}^{\infty} 
\bigg\{\frac{(p+1)\omega}{2} {\rm sech}^2
\left(\frac{(p-1)\sqrt{\omega}}{2}\,x+\tanh^{-1} \xi  (\omega,\gamma) \right) 
\bigg\}^{\frac{2}{p-1}}dx \\
&=\frac{4}{(p-1)\sqrt{\omega}} \left(\frac{(p+1)\omega}{2}\right)^{\frac{2}{p-1}}
\int_{\tanh^{-1} \xi  (\omega,\gamma)}^{\infty} ({\rm sech}^2 y)^{\frac{2}{p-1}}\,dy,
\end{align*}
where we put 
\begin{equation}\label{xi-omega}
\xi (\omega,\gamma)=\frac{\gamma}{2\sqrt{\omega}}.
\end{equation}
Moreover, for $0<a<1$ and $\beta>0$, we have 
\begin{equation}\label{CI}
\int_{\tanh^{-1}a}^{\infty} ({\rm sech}^2 y)^{\beta}\,dy
=\int_{a}^{1} (1-s^2)^{\beta-1}\,ds.
\end{equation} 
Thus, we obtain 
\begin{align*}
&\|\phi_{\omega}\|_{L^2}^{2}
=\frac{4}{p-1} \left(\frac{p+1}{2}\right)^{\frac{2}{p-1}}
\left(\frac{2}{\gamma}\right)^{\frac{p-5}{p-1}} F\left(\xi(\omega,\gamma)\right), \\
&F(\xi)=\xi^{\frac{p-5}{p-1}} \int_{\xi}^{1}(1-s^2)^{\frac{2}{p-1}-1}\,ds. 
\end{align*}
Then, since $\partial_{\omega}\xi(\omega,\gamma)<0$, 
for $\xi=\xi(\omega,\gamma)$, we see that 
\begin{align*}
&\partial_{\omega}\|\phi_{\omega}\|_{L^2}^{2}<0 \iff F'(\xi)>0 \\
&\iff \frac{p-5}{p-1}\int_{\xi}^{1}(1-s^2)^{\frac{2}{p-1}-1}\,ds
>\xi\, (1-\xi^2)^{\frac{2}{p-1}-1} \\
&\iff \xi<\xi_0(p) 
\iff \omega>\omega_0(p,\gamma). 
\end{align*}
\end{rem} 

\begin{rem}
For the borderline case $\omega=\omega_0$ in Proposition \ref{prp2} (ii), 
the standing wave solution $e^{i \omega_0 t}\phi_{\omega_0}$ of \eqref{nls} 
is orbitally unstable (see \cite{oht4}). 
\end{rem}

Now we state our main result in this paper.

\begin{thm}\label{thm1}
Let $\gamma>0$, $p>5$, $\omega>\gamma^2/4$, 
and let $\phi_{\omega}$ be the function defined by \eqref{for}.  
Let $\xi_1(p)\in (0,1)$ be a unique solution of 
\begin{equation}\label{xi1}
\frac{p-5}{p-1} \int_{\xi}^{1}(1-s^2)^{\frac{2}{p-1}}\,ds
=\xi\,(1-\xi^2)^{\frac{2}{p-1}} \quad (0<\xi<1),
\end{equation}
and define $\omega_1=\omega_1(p,\gamma)=\gamma^2/[4\xi_1(p)^2]$. 
Then, the standing wave solution $e^{i\omega t}\phi_{\omega}$ of \eqref{nls} 
is strongly unstable for all $\omega\in (\omega_1,\infty)$. 
\end{thm}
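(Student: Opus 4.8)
The plan is to combine a virial concavity argument with a variational characterization of $\phi_\omega$, the key preliminary observation being that \eqref{xi1} is simply the vanishing of the energy. With $S_\omega(u)=E(u)+\tfrac{\omega}{2}\|u\|_{L^2}^2$ and the $L^2$-invariant scaling $u^\lambda(x)=e^{\lambda/2}u(e^\lambda x)$, I would introduce the virial functional
\[ Q(u):=\p_\lambda S_\omega(u^\lambda)\big|_{\lambda=0}=\|\p_x u\|_{L^2}^2-\frac{\gamma}{2}|u(0)|^2-\frac{p-1}{2(p+1)}\|u\|_{L^{p+1}}^{p+1}. \]
Since $\phi_\omega$ is a critical point of $S_\omega$ we have $Q(\phi_\omega)=0$, and substituting this into the definition of $E$ gives $E(\phi_\omega)=-\tfrac{\gamma}{4}|\phi_\omega(0)|^2+\tfrac{p-5}{4(p+1)}\|\phi_\omega\|_{L^{p+1}}^{p+1}$. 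Inserting \eqref{for} and using \eqref{CI} exactly as in Remark \ref{rem1}, I would verify that $E(\phi_\omega)=0$ is precisely \eqref{xi1} and that $E(\phi_\omega)>0$ for $\omega>\omega_1$; thus the hypothesis $\omega>\omega_1$ is to be read as the positive-energy condition $E(\phi_\omega)>0$.

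The blow-up would come from the virial identity $\tfrac{d^2}{dt^2}\|xu(t)\|_{L^2}^2=8\,Q(u(t))$, valid for finite-variance data (and $x\phi_\omega\in L^2$ since $\phi_\omega$ decays exponentially). As initial data I take the scaled ground state $u_0=(\phi_\omega)^\lambda$ for small $\lambda>0$: then $\|u_0\|_{L^2}=\|\phi_\omega\|_{L^2}$ and $u_0\to\phi_\omega$ in $H^1$, while $\p_\mu^2S_\omega((\phi_\omega)^\mu)|_{\mu=0}<0$ on the range $\omega>\omega_1$ gives both $Q(u_0)<0$ and $S_\omega(u_0)<S_\omega(\phi_\omega)$. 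Because $u_0$ is real, the first derivative of the variance vanishes at $t=0$ while the second equals $8Q(u_0)<0$; hence, as soon as one knows $Q(u(t))<0$ for all $t$, the variance is concave and eventually strictly decreasing, so it reaches zero in finite time and $\|\p_x u(t)\|_{L^2}\to\infty$ by the blow-up alternative of Proposition 1.1.

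Everything then reduces to the persistence of $Q(u(t))<0$. Setting $d(\omega):=S_\omega(\phi_\omega)$, I would prove the variational identity $d(\omega)=\inf\{S_\omega(u):u\in H^1\setminus\{0\},\ Q(u)=0,\ E(u)\ge0\}$, for which $\phi_\omega$ is admissible precisely because $E(\phi_\omega)>0$. Granting this, the region $\{u:S_\omega(u)<d(\omega),\ Q(u)<0,\ E(u)>0\}$ contains $u_0$ and is invariant: $E$ and the $L^2$ norm are conserved (hence so is $S_\omega$), so if $Q(u(t))$ first vanished at some $t_0$ then $u(t_0)\ne0$ would satisfy $Q(u(t_0))=0$ and $E(u(t_0))>0$, whence $S_\omega(u(t_0))\ge d(\omega)$ by the identity, contradicting $S_\omega(u(t_0))=S_\omega(u_0)<d(\omega)$. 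Therefore $Q(u(t))<0$ for all $t$ and the concavity argument closes.

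The main obstacle is the delta potential's effect on the scaling geometry, and it dictates the constraint $E\ge0$. In the translation-invariant case $Q(u^\mu)\to0^+$ as $\mu\to-\infty$, so any $Q<0$ state can be rescaled onto $\{Q=0\}$ and $\phi_\omega$ simply minimizes $S_\omega$ there. Here the term $-\tfrac{\gamma}{2}e^\mu|u(0)|^2$ dominates as $\mu\to-\infty$, so $Q(u^\mu)\to0^-$ and $\mu\mapsto S_\omega((\phi_\omega)^\mu)$ acquires a spurious second critical point, a local minimum at some $\mu_-<0$ lying on $\{Q=0\}$ with $S_\omega((\phi_\omega)^{\mu_-})<S_\omega(\phi_\omega)$; thus $\phi_\omega$ does not minimize $S_\omega$ on the whole Pohozaev manifold and the naive argument fails. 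The point is that every such local minimum has value below $\lim_{\mu\to-\infty}S_\omega(u^\mu)=\tfrac{\omega}{2}\|u\|_{L^2}^2$, i.e. negative energy; since $Q(u^\mu)$ has at most two zeros, imposing $E(u)\ge0$ discards these points and forces $\mu=0$ to be the global maximum of the scaling, collapsing the constrained infimum to the scaling-mountain-pass value $\inf_{v\ne0}\sup_\mu S_\omega(v^\mu)=S_\omega(\phi_\omega)$. Proving this identity rigorously is the delicate step, and it is exactly where the positive-energy hypothesis $\omega>\omega_1$ is indispensable.
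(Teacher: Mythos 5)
Your overall architecture coincides with the paper's: reduce the hypothesis $\omega>\omega_1$ to the positive-energy condition $E(\phi_\omega)>0$ via \eqref{for} and \eqref{CI} (your computation matches Lemma \ref{lemma5}), take the $L^2$-invariant rescalings of the ground state as initial data, propagate the negativity of the virial functional $Q=P$ by an invariant-set argument based on conservation of $E$ and the $L^2$ norm, and conclude by concavity of the variance. The genuine difference is how the sign of $P$ is propagated. The paper keeps the Nehari functional $K_\omega$ inside its invariant set $\mathcal{B}_\omega$ and therefore only ever needs the weak statement of Lemma \ref{lemma2}: if $E(v)>0$, $K_\omega(v)<0$ and $P(v)=0$, then $S_\omega(v)>d(\omega)$; this follows at once from the known Nehari characterization \eqref{gs1}, since the ray $\lambda\mapsto v^\lambda$ crosses $\{K_\omega=0\}$ at some $\lambda_0\in(0,1)$ while $\lambda=1$ is the maximum of $S_\omega(v^\lambda)$ along the ray. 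You instead discard $K_\omega$ entirely and rest the whole proof on the stronger identity $d(\omega)=\inf\{S_\omega(u):u\ne0,\ Q(u)=0,\ E(u)\ge0\}$, which you correctly single out as the delicate step but do not prove: your sketch establishes the exclusion of the negative-energy local minima (correct, and this is the same observation as Lemma \ref{lemma1}), i.e.\ that admissible $u$ sit at the global maximum of their scaling ray, but the concluding equality $\inf_{v\ne0}\sup_\mu S_\omega(v^\mu)=S_\omega(\phi_\omega)$ is asserted without argument. This missing half does not fail; it can be closed by exactly the mechanism you removed: for any $v\ne0$ one has $K_\omega(v^\lambda)\to\omega\|v\|_{L^2}^2>0$ as $\lambda\to0^+$ and $K_\omega(v^\lambda)\to-\infty$ as $\lambda\to\infty$, so the ray crosses $\{K_\omega=0\}$ at some $\lambda_*$, where $S_\omega(v^{\lambda_*})\ge d(\omega)$ by \eqref{gs1}; since your constraints force $\lambda=1$ to be the global maximum of the ray, $S_\omega(v)\ge S_\omega(v^{\lambda_*})\ge d(\omega)$. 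So your identity is true, but its natural proof passes through the Nehari structure; the paper's decision to retain $K_\omega<0$ in $\mathcal{B}_\omega$ is precisely what lets it avoid proving anything stronger than Lemma \ref{lemma2}.

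One further difference: your blow-up step exploits that $u_0=\phi_\omega^\lambda$ is real, so the variance has vanishing first derivative at $t=0$ and mere strict negativity of $Q(u(t))$ for all $t$ already forces $T_{\max}<\infty$ (correct, by concavity from any later time $t_1$ with $f'(t_1)<0$). The paper instead proves the uniform bound $P(u(t))\le E(u_0)-E(\phi_\omega)<0$ (Lemma \ref{EP}), which yields blow-up for every, possibly complex, $u_0\in\Sigma\cap\mathcal{B}_\omega$; that uniformity is what makes Theorem \ref{thm2} a statement about an open set of data rather than only about the scaled solitons. For Theorem \ref{thm1} itself your weaker mechanism suffices, so once the variational identity above is supplied, your proof closes.
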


\begin{rem}
The condition $\omega>\omega_1$ in Theorem \ref{thm1} 
is equivalent to $E(\phi_{\omega})>0$ 
(see Theorem \ref{thm2} below). 
\end{rem}

\begin{rem}
For the repulsive potential case $\gamma<0$, 
it is proved in \cite{LFF} that if $p\ge 5$, 
the standing wave solution $e^{i \omega t}\phi_{\omega}$ of \eqref{nls} 
is strongly unstable for all $\omega \in (\gamma^2/4,\infty)$. 
The situation for the attractive potential case $\gamma>0$ 
is quite different from the case $\gamma<0$, 
and we need a new approach to prove Theorem \ref{thm1}. 
\end{rem}

For $\gamma>0$, $p>1$ and $\omega>\gamma^2/4$, 
we define functionals $S_{\omega}$ and $K_{\omega}$ on $H^1(\R)$ by
\begin{align*}
 &S_{\omega}(v)=\frac{1}{2}\|\p_x v\|_{L^2}^2
 +\frac{\omega}{2}\|v\|_{L^2}^2
 -\frac{\gamma}{2}|v(0)|^2
 -\frac{1}{p+1}\|v\|_{L^{p+1}}^{p+1}, \\
 &K_{\omega}(v)=\|\p_x v\|_{L^2}^2
 +\omega \|v\|_{L^2}^2
 -\gamma |v(0)|^2
 -\|v\|_{L^{p+1}}^{p+1}.
\end{align*}
Note that  \eqref{sp} is equivalent to $S_{\omega}'(\phi)=0$, and 
\begin{align*}
 K_{\omega}(v)=\p_{\lambda} S_{\omega}(\lambda v)\big|_{\lambda=1}
 =\langle S_{\omega}'(v), v \rangle
\end{align*}
is the so-called Nehari functional. 

We denote the set of nontrivial solutions of \eqref{sp} by 
$$\mathcal{A}_{\omega}
=\{v\in H^1(\R^N): S_{\omega}'(v)=0, ~ v\ne 0\},$$ 
and define the set of ground states of \eqref{sp} by 
\begin{align}
\mathcal{G}_{\omega}
=\{\phi\in \mathcal{A}_{\omega}:
S_{\omega}(\phi)\le S_{\omega}(v)
\hspace{1mm} \mbox{for all} \hspace{1mm} 
v\in \mathcal{A}_{\omega}\}. 
\label{gs}
\end{align}

Moreover, consider the minimization problem:
\begin{align}
d(\omega)
=\inf\{S_{\omega}(v):v\in H^1(\R^N), ~ K_{\omega}(v)=0, ~ v\ne 0\}.
\label{gs1}
\end{align}
Then, for any $\omega>\gamma^2/4$, we have 
\begin{align*}
\mathcal{A}_{\omega}
&=\mathcal{G}_{\omega}
=\{\phi\in H^1(\R^N): 
S_{\omega}(\phi)=d(\omega), ~ K_{\omega}(\phi)=0\} \\
&=\{e^{i \theta}\phi_{\omega} : \theta \in \R \},
\end{align*}
where $\phi_{\omega}$ is the function defined by \eqref{for} 
(see \cite{FOO, LFF}). 

On the other hand, 
the proof of finite time blowup for \eqref{nls} relies on the virial identity. 
If $u_0\in \Sigma:=\{v\in H^1(\R): |x|v\in L^2(\R)\}$, 
then the solution $u(t)$ of \eqref{nls} with $u(0)=u_0$ 
belongs to $C([0,T_{\max}),\Sigma)$, and satisfies 
\begin{align} \label{virial}
\frac{d^2}{dt^2}\|xu(t)\|_{L^2}^2=8P(u(t))
\end{align}
for all $t\in [0,T_{\max})$, where 
\begin{align*}
P(v)=\|\p_x v\|_{L^2}^2
-\frac{\gamma}{2}|v(0)|^2
-\frac{\alpha}{p+1}\|v\|_{L^{p+1}}^{p+1}, 
\quad \alpha:=\frac{p-1}{2}. 
\end{align*}
For the proof of the virial identity \eqref{virial}, see Proposition 6 in \cite{LFF}.

Note that for the scaling $v^{\lambda}(x)=\lambda^{1/2}v(\lambda x)$ for $\lambda>0$, 
we have 
\begin{align*}
&\|\p_x v^{\lambda}\|_{L^2}^2=\lambda^2\|\p_x v\|_{L^2}^2, \quad
|v^{\lambda}(0)|^2=\lambda|v(0)|^2, \quad 
\|v^{\lambda}\|_{L^{p+1}}^{p+1}=\lambda^{\alpha}\|v\|_{L^{p+1}}^{p+1}, \\
&\|v^{\lambda}\|_{L^2}^2=\|v\|_{L^2}^2, \quad
P(v)=\p_{\lambda} E(v^{\lambda})\big|_{\lambda=1}.
\end{align*}

The method of Berestycki and Cazenave \cite{BC} 
for the nonlinear Schr\"{o}dinger equations without potential \eqref{nls1} 
is based on the fact that 
$d(\omega)=S_{\omega}(\phi_{\omega})$ can be characterized as 
\begin{align}
d(\omega)
=\inf\{S_{\omega}(v):v\in H^1(\R), ~ P(v)=0, ~ v\ne 0\} 
\label{bc1}
\end{align}
for the case $p\ge 5$. 
Using this fact, it is proved in \cite{BC} that 
if $u_0\in \Sigma \cap \mathcal{B}_{\omega}^{BC}$, 
then the solution $u(t)$ of \eqref{nls1} with $u(0)=u_0$ blows up in finite time, 
where 
$$\mathcal{B}_{\omega}^{BC}=
\{v\in H^1(\R):S_{\omega}(v)<d(\omega),~ P(v)<0\}.$$
We remark that \eqref{bc1} does not hold for \eqref{nls} with $\gamma>0$. 

On the other hand, 
Zhang \cite{zhang} and Le Coz \cite{lec} gave an alternative proof of the result 
of Berestycki and Cazenave \cite{BC} for \eqref{nls1}. 
Instead of solving the minimization problem \eqref{bc1}, 
they \cite{zhang,lec} proved that  
\begin{align}
d(\omega)\le \inf\{S_{\omega}(v):
v\in H^1(\R), ~ P(v)=0, ~ K_{\omega}(v)<0\}
\label{zl}
\end{align}
holds for all $\omega>0$ if $p\ge 5$. 
Using this fact, it is proved in \cite{zhang,lec} that 
if $u_0\in \Sigma \cap \mathcal{B}_{\omega}^{ZL}$, 
then the solution $u(t)$ of \eqref{nls1} with $u(0)=u_0$ blows up in finite time, where 
$$\mathcal{B}_{\omega}^{ZL}=
\{v\in H^1(\R):S_{\omega}(v)<d(\omega),~ P(v)<0, ~ K_{\omega}(v)<0\}.$$
Note that this method can be applied to \eqref{nls} for the repulsive potential case $\gamma<0$ 
(see \cite{LFF}), but not for the attractive potential case $\gamma>0$. 

In this paper, we use and modify the idea of Zhang \cite{zhang} and Le Coz \cite{lec}   
to prove Theorem \ref{thm1}. 
For $\omega\in (\gamma^2/4,\infty)$ with $E(\phi_{\omega})>0$, 
we introduce a new set 
\begin{align}\label{B} 
\mathcal{B}_{\omega}=\{v\in H^1(\R): 
0<E(v)<E(\phi_{\omega}), ~ 
&\|v\|_{L^2}^2=\|\phi_{\omega}\|_{L^2}^2, \\
&P(v)<0, ~ K_{\omega}(v)<0\}. 
\nonumber 
\end{align}
Then, we have the following.

\begin{thm}\label{thm2}
Let $\gamma>0$, $p>5$, $\omega>\gamma^2/4$, 
and assume that $\phi_{\omega}$ satisfies $E(\phi_{\omega})>0$. 
If $u_0\in \Sigma \cap \mathcal{B}_{\omega}$, 
then the solution $u(t)$ of \eqref{nls} with $u(0)=u_0$ blows up in finite time. 
\end{thm}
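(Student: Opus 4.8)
The plan is to run the classical concavity (virial) argument on the variance $I(t)=\|xu(t)\|_{L^2}^2$, which is legitimate because $u_0\in\Sigma$ forces $u(t)\in\Sigma$ and \eqref{virial} gives $I''(t)=8P(u(t))$. Everything then reduces to two claims: (a) $\mathcal{B}_\omega$ is invariant under the flow, and (b) there is $\delta>0$ with $P(u(t))\le-\delta$ for all $t$. Granting these, $I$ is a nonnegative function with $I''\le-8\delta<0$, hence $I(t)\le I(0)+I'(0)t-4\delta t^2$ would have to vanish in finite time; this is impossible unless $T_{\max}<\infty$, so $u$ blows up.

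For both claims I would isolate the following static lemma: if $\|v\|_{L^2}^2=\|\phi_\omega\|_{L^2}^2$, $K_\omega(v)<0$ and $0<E(v)<E(\phi_\omega)$, then $P(v)\le 2\big(E(v)-E(\phi_\omega)\big)<0$. This is the heart of the matter and rests on the scaling $v^\lambda(x)=\lambda^{1/2}v(\lambda x)$. Writing $h(\lambda)=E(v^\lambda)$, one has $P(v^\lambda)=\lambda h'(\lambda)$, and since the mass is scale-invariant, $S_\omega(v^\lambda)=h(\lambda)+\tfrac\omega2\|\phi_\omega\|_{L^2}^2$. Because $p>5$ gives $\alpha=(p-1)/2>2$, the function satisfies $h(0^+)=0$, $h(\lambda)\to-\infty$, and $h'$ is unimodal (increasing then decreasing) with $h'(0^+)=-\tfrac\gamma2|v(0)|^2\le0$. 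From $K_\omega(v)<0$ and $K_\omega(v^\lambda)\to\omega\|v\|_{L^2}^2>0$ as $\lambda\to0^+$ one finds $\lambda_0\in(0,1)$ with $K_\omega(v^{\lambda_0})=0$, whence \eqref{gs1} together with the fixed mass gives $h(\lambda_0)=E(v^{\lambda_0})\ge E(\phi_\omega)$. Combining $h(1)=E(v)\in(0,E(\phi_\omega))$ with $h(\lambda_0)\ge E(\phi_\omega)>h(1)$ and the shape of $h$ forces the global maximum of $h$ to be attained at some $\lambda_+\in(0,1)$ with $h(\lambda_+)\ge E(\phi_\omega)$, and $h$ to be strictly decreasing on $[\lambda_+,\infty)$; in particular $P(v)=h'(1)<0$.

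To upgrade $P(v)<0$ to the quantitative bound I would introduce $m(\lambda)=2h(\lambda)-\lambda h'(\lambda)$, which computes to $m(\lambda)=-\tfrac\gamma2\lambda|v(0)|^2+\tfrac{p-5}{2(p+1)}\lambda^\alpha\|v\|_{L^{p+1}}^{p+1}$, so that $m'(\lambda)=-\tfrac\gamma2|v(0)|^2+\tfrac{(p-5)\alpha}{2(p+1)}\lambda^{\alpha-1}\|v\|_{L^{p+1}}^{p+1}$ is strictly increasing for $p>5$. Since $m'=h'-\lambda h''$ and $\lambda_+$ maximizes $h$, we get $m'(\lambda_+)=-\lambda_+h''(\lambda_+)\ge0$; monotonicity of $m'$ then yields $m'\ge0$ on $[\lambda_+,\infty)$, so $m$ is nondecreasing there and $m(1)\ge m(\lambda_+)=2h(\lambda_+)\ge 2E(\phi_\omega)$. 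As $m(1)=2E(v)-P(v)$, this is exactly $P(v)\le 2(E(v)-E(\phi_\omega))$, completing the lemma.

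Finally I would assemble the proof. By \eqref{conservation} the quantities $E(u(t))=E(u_0)\in(0,E(\phi_\omega))$ and $\|u(t)\|_{L^2}^2=\|\phi_\omega\|_{L^2}^2$ are constant, so $S_\omega(u(t))\equiv S_\omega(u_0)<d(\omega)$. If $K_\omega(u(t))$ reached $0$ at a first time $t_*$, then $u(t_*)\ne0$ and \eqref{gs1} would give $S_\omega(u(t_*))\ge d(\omega)$, a contradiction; hence $K_\omega(u(t))<0$ for all $t$. The hypotheses of the lemma thus hold along the entire trajectory, giving $P(u(t))\le 2(E(u_0)-E(\phi_\omega))=:-\delta<0$, and the concavity argument concludes. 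The main obstacle is precisely the attractive boundary term $-\tfrac\gamma2|v(0)|^2$ in $E$: it has the wrong sign and destroys the scaling homogeneity that, for $\gamma\le0$, makes $\lambda=1$ the maximizer of $E(v^\lambda)$ and yields \eqref{zl} outright. The two devices that rescue the argument are the fixed-mass constraint in \eqref{B}, which converts the Nehari bound $S_\omega(v^{\lambda_0})\ge d(\omega)$ into the energy bound $h(\lambda_0)\ge E(\phi_\omega)$, together with the positivity $E(v)>0$ and the gap $E(v)<E(\phi_\omega)$, which pin the maximizer $\lambda_+$ strictly inside $(0,1)$ and let the monotone auxiliary function $m$ transport the bound at $\lambda_+$ back to $\lambda=1$.
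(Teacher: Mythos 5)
Your proposal is correct, and although it shares the paper's overall skeleton --- the virial identity \eqref{virial}, flow-invariance extracted from the conservation laws plus the Nehari characterization \eqref{gs1}, and the fixed-mass trick that converts the bound $S_{\omega}(v^{\lambda_0})\ge d(\omega)$ into the energy bound $E(v^{\lambda_0})\ge E(\phi_{\omega})$ --- its internal route is genuinely different in two respects. First, the paper's key estimate (Lemma \ref{EP}: $E(\phi_{\omega})\le E(v)-P(v)$) assumes $P(v)<0$, so the paper must separately prove that $P<0$ is preserved by the flow; this is exactly what Lemma \ref{lemma2} and the second half of Lemma \ref{invariant} are for. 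Your static lemma assumes only the fixed mass, $K_{\omega}(v)<0$ and $0<E(v)<E(\phi_{\omega})$, and \emph{derives} $P(v)<0$: you pin the maximizer $\lambda_+$ of $h(\lambda)=E(v^{\lambda})$ into $(0,1)$ via the energy gap $h(\lambda_0)\ge E(\phi_{\omega})>h(1)$ rather than via the sign of $P(v)=h'(1)$ as in Lemmas \ref{lemma1} and \ref{EP}; this eliminates Lemma \ref{lemma2} and one invariance check, and shows in passing that the condition $P(v)<0$ in the definition \eqref{B} of $\mathcal{B}_{\omega}$ is redundant. Second, your quantitative step replaces the paper's Taylor expansion with concavity on $[\lambda_3,\infty)$ (which yields $E(v^{\lambda_3})\le E(v)+(\lambda_3-1)P(v)\le E(v)-P(v)$) by the auxiliary function $m(\lambda)=2h(\lambda)-\lambda h'(\lambda)$, whose derivative $m'=h'-\lambda h''$ is explicit, strictly increasing, and nonnegative at $\lambda_+$; transporting $m(\lambda_+)=2h(\lambda_+)\ge 2E(\phi_{\omega})$ to $\lambda=1$ gives $P(v)\le 2\bigl(E(v)-E(\phi_{\omega})\bigr)$, a bound with constant $2$ rather than the paper's $1$ (stronger, though either suffices for the concavity argument). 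The trade-off: your decomposition is leaner (one static lemma in place of Lemmas \ref{lemma1}, \ref{lemma2}, \ref{EP}, with only the $K_{\omega}<0$ invariance left to verify) and exposes a redundancy in $\mathcal{B}_{\omega}$, while the paper's version keeps the transparent geometric picture of the graph $\lambda\mapsto E(v^{\lambda})$ and an elementary Taylor estimate at the cost of one extra invariance argument.
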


The rest of the paper is organized as follows. 
In Section 2, 
using the same method as in our previous paper \cite{OY}, 
we prove Theorem \ref{thm2}. 
In Section 3, we show that 
the sufficient condition $E(\phi_{\omega})>0$ in Theorem \ref{thm2} 
holds if and only if $\omega>\omega_1$, 
and prove Theorem \ref{thm1} using Theorem \ref{thm2}. 
Finally, in Section 4, we compare our sufficient condition for strong instability 
with some known results for orbital instability. 

\section{Proof of Theorem \ref{thm2}}

In this section, we prove Theorem \ref{thm2}. 
As we have already noticed in \S 1, 
the proof of Theorem \ref{thm2} for \eqref{nls} is almost the same as 
that for \eqref{nls0} given in \cite{OY}. 
For the sake of completeness, we repeat the argument in \cite{OY}. 

Throughout this section, we assume that 
$$\gamma>0, \quad p>5, \quad \omega>\frac{\gamma^2}{4}, \quad  E(\phi_{\omega})>0.$$
Recall that $\alpha:=\dfrac{p-1}{2}>2$, and 
for the scaling $v^{\lambda}(x)=\lambda^{1/2} v(\lambda x)$, we have 
\begin{align}
&E(v^{\lambda})
=\frac{\lambda^2}{2}\|\p_x v\|_{L^2}^2
-\frac{\gamma \lambda}{2}|v(0)|^2
-\frac{\lambda^{\alpha}}{p+1}\|v\|_{L^{p+1}}^{p+1}, 
\label{r1}\\
&P(v^{\lambda})
=\lambda^2 \|\p_x v\|_{L^2}^2
-\frac{\gamma \lambda}{2}|v(0)|^2
-\frac{\alpha \lambda^{\alpha}}{p+1}\|v\|_{L^{p+1}}^{p+1} 
=\lambda \p_{\lambda} E(v^{\lambda}), 
\label{r2} \\
&K_{\omega}(v^{\lambda})
=\lambda^2\|\p_x v\|_{L^2}^2+\omega\|v\|_{L^2}^2
-\gamma \lambda|v(0)|^2
-\lambda^{\alpha}\|v\|_{L^{p+1}}^{p+1}. 
\label{r3}
\end{align}

\centerline{\includegraphics[scale=0.8]{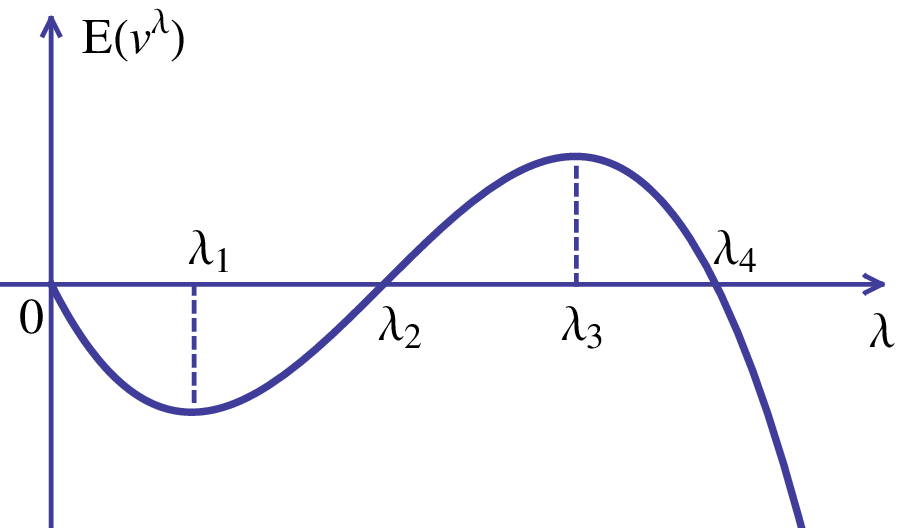}}
\begin{center}
Figure 1. \hspace{1mm} The graph of $\lambda\mapsto E(v^{\lambda})$ 
for the case $E(v)>0$. 
\end{center}

\begin{lmm}\label{lemma1}
If $v\in H^1(\R)$ satisfies $E(v)>0$, 
then there exist $\lambda_k=\lambda_k(v)$ $(k=1,2,3,4)$ such that 
$0<\lambda_1<\lambda_2<\lambda_3<\lambda_4$ and 
\begin{itemize}
\item $E(v^{\lambda})$ is decreasing in $(0,\lambda_1)\cup (\lambda_3,\infty)$, 
and increasing in $(\lambda_1,\lambda_3)$. 
\item $E(v^{\lambda})$ is negative in $(0,\lambda_2)\cup (\lambda_4,\infty)$, 
and positive in $(\lambda_2,\lambda_4)$. 
\item $E(v^{\lambda})<E(v^{\lambda_3})$ 
for all $\lambda\in (0,\lambda_3)\cup (\lambda_3,\infty)$.  
\end{itemize}
\end{lmm}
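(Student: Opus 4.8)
The plan is to reduce the entire statement to an elementary one-variable analysis of $f(\lambda):=E(v^{\lambda})$ on $(0,\infty)$. By \eqref{r1}, setting $A:=\tfrac12\|\p_x v\|_{L^2}^2$, $B:=\tfrac{\gamma}{2}|v(0)|^2$ and $C:=\tfrac{1}{p+1}\|v\|_{L^{p+1}}^{p+1}$, we have
\begin{equation*}
f(\lambda)=A\lambda^{2}-B\lambda-C\lambda^{\alpha},\qquad \alpha=\frac{p-1}{2}>2.
\end{equation*}
Since $E(v)>0$ forces $v\neq0$, and a nonzero $H^1(\R)$ function is non-constant, we have $A>0$ and $C>0$; as $\gamma>0$ we have $B\ge0$, and the configuration in the statement (and in Figure~1) is exactly the case $B>0$, i.e. $v(0)\neq0$, which I assume throughout. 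Then $f(0)=0$, $f(1)=E(v)>0$, and, because $\alpha>2$ makes the last term dominate, $f(\lambda)\to-\infty$ as $\lambda\to\infty$.

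First I would differentiate twice,
\begin{align*}
f'(\lambda)&=2A\lambda-B-\alpha C\lambda^{\alpha-1},\\
f''(\lambda)&=2A-\alpha(\alpha-1)C\lambda^{\alpha-2},
\end{align*}
and exploit that $\alpha>2$: the map $\lambda\mapsto\lambda^{\alpha-2}$ is strictly increasing, so $f''$ is strictly decreasing from $f''(0)=2A>0$ to $-\infty$ and thus vanishes at a single point $\lambda^{*}$. Hence $f$ is convex on $(0,\lambda^{*})$ and concave on $(\lambda^{*},\infty)$, which makes $f'$ strictly increasing on $(0,\lambda^{*})$ and strictly decreasing on $(\lambda^{*},\infty)$, i.e. unimodal, with $f'(0)=-B<0$ and $f'(\lambda)\to-\infty$.

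The one nonformal input, where the hypothesis $E(v)>0$ enters, is that $f'$ really becomes positive: if $f'\le0$ held on all of $(0,\infty)$ then $f$ would be non-increasing and so $f(1)\le f(0)=0$, contradicting $f(1)=E(v)>0$; therefore $f'(\lambda^{*})>0$. Together with unimodality this forces $f'$ to have exactly two zeros $\lambda_1<\lambda^{*}<\lambda_3$, giving the first bullet. Because $f$ decreases from $f(0)=0$ on $(0,\lambda_1)$ we get $f(\lambda_1)<0$, while $f(\lambda_3)\ge f(1)>0$, so $\lambda_3$ is the strict global maximum (third bullet); and on the two monotone intervals $(\lambda_1,\lambda_3)$ and $(\lambda_3,\infty)$ the function crosses zero exactly once, at points I call $\lambda_2$ and $\lambda_4$, which yields the sign pattern and the ordering $\lambda_1<\lambda_2<\lambda_3<\lambda_4$ of the second bullet.

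I expect no real obstacle here, since the argument is pure calculus; the only points needing care are that $f$ has exactly one inflection and $f'$ exactly two roots in the stated order. These are delivered precisely by the single sign change of $f''$ (a consequence of $\alpha>2$) together with $f'(\lambda^{*})>0$ (a consequence of $E(v)>0$).
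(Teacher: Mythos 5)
Your proof is correct, and it is essentially the paper's own approach made rigorous: the paper disposes of this lemma in one line ("the conclusion is easily verified by drawing the graph of \eqref{r1}, see Figure 1"), whereas you supply the actual calculus behind that graph --- the single sign change of $f''$ coming from $\alpha>2$, the unimodality of $f'$ with $f'(0)=-B<0$, and the observation that $E(v)>0$ forces $f'$ to be positive somewhere, which pins down exactly two critical points and the stated sign pattern. Your explicit restriction to $v(0)\neq 0$ is well taken: if $v(0)=0$ then $E(v^{\lambda})=A\lambda^{2}-C\lambda^{\alpha}$ is increasing and positive near $\lambda=0$, so $\lambda_{1}$ and $\lambda_{2}$ do not exist and the lemma as literally stated fails; the paper silently assumes this nondegeneracy (it is what Figure 1 depicts), so your proof matches the intended statement, and it is to your credit that you flagged the hypothesis explicitly rather than leaving it implicit.
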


\begin{proof}
Since $\gamma>0$, $\alpha>2$ and $E(v)>0$, 
the conclusion is easily verified by drawing the graph of \eqref{r1} 
(see Figure 1). 
\end{proof}

\begin{lmm}\label{lemma2}
If $v\in H^1(\R)$ satisfies 
$E(v)>0$, $K_{\omega}(v)<0$ and $P(v)=0$, 
then $d(\omega)<S_{\omega}(v)$. 
\end{lmm}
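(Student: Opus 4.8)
The plan is to produce a specific rescaling of $v$ that lies on the Nehari constraint $K_{\omega}=0$ and whose $S_{\omega}$-value is strictly smaller than $S_{\omega}(v)$; since $d(\omega)$ is the infimum of $S_{\omega}$ over that constraint, this immediately yields the claim. Throughout I would work with the one-variable function $g(\lambda):=E(v^{\lambda})$, whose qualitative shape is already described by Lemma \ref{lemma1}.

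First I would exploit the hypothesis $P(v)=0$. By \eqref{r2} we have $P(v)=\lambda\,\p_{\lambda}E(v^{\lambda})\big|_{\lambda=1}$, so $g'(1)=0$ and $\lambda=1$ is a critical point of $g$. By Lemma \ref{lemma1}, the only critical points of $g$ are $\lambda_1$ and $\lambda_3$, and the sign information there forces $g(\lambda_1)<0$ (since $\lambda_1\in(0,\lambda_2)$, where $g$ is negative) and $g(\lambda_3)>0$ (since $\lambda_3\in(\lambda_2,\lambda_4)$, where $g$ is positive). Because $g(1)=E(v)>0$ by assumption, the possibility $\lambda_1=1$ is excluded, and therefore $\lambda=1$ must coincide with $\lambda_3$. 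The third bullet of Lemma \ref{lemma1} then delivers the crucial strict inequality $g(\lambda)<g(1)=E(v)$ for every $\lambda\neq1$. Pinning the point $\lambda=1$ to the global maximizer $\lambda_3$ is the heart of the argument and the step I expect to require the most care.

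Next I would use $K_{\omega}(v)<0$ to locate a point on the constraint set. Viewing \eqref{r3} as a function of $\lambda$, as $\lambda\to0^{+}$ every term except $\omega\|v\|_{L^2}^{2}$ vanishes, so $K_{\omega}(v^{\lambda})\to\omega\|v\|_{L^2}^{2}>0$ (here $v\neq0$, since $E(v)>0=E(0)$), whereas $K_{\omega}(v^{1})=K_{\omega}(v)<0$. By continuity there exists $\lambda_*\in(0,1)$ with $K_{\omega}(v^{\lambda_*})=0$ and $v^{\lambda_*}\neq0$, whence $d(\omega)\le S_{\omega}(v^{\lambda_*})$ by the definition \eqref{gs1}.

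Finally I would compare $S_{\omega}(v^{\lambda_*})$ with $S_{\omega}(v)$. Since the scaling preserves the $L^2$-norm, writing $S_{\omega}(w)=E(w)+\tfrac{\omega}{2}\|w\|_{L^2}^{2}$ gives $S_{\omega}(v^{\lambda_*})-S_{\omega}(v)=g(\lambda_*)-g(1)$, which is strictly negative by the inequality obtained in the second step (as $\lambda_*\neq1$). Combining the two estimates yields $d(\omega)\le S_{\omega}(v^{\lambda_*})<S_{\omega}(v)$, which is the assertion. Apart from the geometric identification $\lambda=1=\lambda_3$, the remaining steps are a routine continuity argument together with the $L^2$-invariance of the scaling.
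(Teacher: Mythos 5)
Your proposal is correct and follows essentially the same route as the paper's own proof: locate $\lambda_*\in(0,1)$ on the Nehari constraint by continuity of $\lambda\mapsto K_\omega(v^\lambda)$, identify $\lambda=1$ with the global maximizer $\lambda_3$ of $\lambda\mapsto E(v^\lambda)$ using $P(v)=0$ and $E(v)>0$, and conclude via $S_\omega=E+\tfrac{\omega}{2}\|\cdot\|_{L^2}^2$ and the $L^2$-invariance of the scaling. The only difference is that you spell out the exclusion of $\lambda_1=1$ (via $g(\lambda_1)<0$) more explicitly than the paper does, which is a harmless elaboration of the same step.
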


\begin{proof}
We consider two functions 
$f(\lambda)=K_{\omega}(v^{\lambda})$ and $g(\lambda)=E(v^{\lambda})$. 

Since $f(0)=\omega \|v\|_{L^2}^2>0$ and $f(1)=K_{\omega}(v)<0$, 
there exists $\lambda_0\in (0,1)$ such that 
$K_{\omega}(v^{\lambda_0})=0$. 
Moreover, since $v^{\lambda_0}\ne 0$, it follows from \eqref{gs1} that 
$d(\omega)\le S_{\omega}(v^{\lambda_0})$.

On the other hand, 
since $g'(1)=P(v)=0$ and $g(1)=E(v)>0$, 
it follows from Lemma \ref{lemma1} that 
$\lambda_3=1$ and $g(\lambda)<g(1)$ for all $\lambda\in (0,1)$. 

Thus, we have 
$E(v^{\lambda_0})<E(v)$, and 
\begin{align*}
d(\omega)\le S_{\omega}(v^{\lambda_0}) 
=E(v^{\lambda_0})+\frac{\omega}{2}\|v^{\lambda_0}\|_{L^2}^2
<E(v)+\frac{\omega}{2}\|v\|_{L^2}^2=S_{\omega}(v). 
\end{align*}
This completes the proof. 
\end{proof}

\begin{lmm}\label{invariant}
The set 
\begin{align*}
\mathcal{B}_{\omega}=\{v\in H^1(\R): 
0<E(v)<E(\phi_{\omega}), ~
\|v\|_{L^2}^2=\|\phi_{\omega}\|_{L^2}^2, ~ P(v)<0, ~ K_{\omega}(v)<0\}
\end{align*}
is invariant under the flow of \eqref{nls}. 
That is, if $u_0\in \mathcal{B}_{\omega}$, 
then the solution $u(t)$ of \eqref{nls} with $u(0)=u_0$ satisfies 
$u(t)\in \mathcal{B}_{\omega}$ for all $t\in [0,T_{\max})$. 
\end{lmm}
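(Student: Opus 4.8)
The plan is to prove that $\mathcal{B}_{\omega}$ is invariant by verifying that each of its four defining conditions is preserved along the flow, using the conservation laws together with the continuity of the solution in $H^1(\R)$. The two conservation laws $E(u(t))=E(u_0)$ and $\|u(t)\|_{L^2}^2=\|u_0\|_{L^2}^2$ from \eqref{conservation} immediately propagate the energy condition $0<E(v)<E(\phi_{\omega})$ and the mass constraint $\|v\|_{L^2}^2=\|\phi_{\omega}\|_{L^2}^2$ for all $t\in[0,T_{\max})$, since these quantities are constant in $t$. The remaining task is to show that the open conditions $P(u(t))<0$ and $K_{\omega}(u(t))<0$ persist.

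First I would argue by contradiction using continuity. Suppose $u_0\in\mathcal{B}_{\omega}$ but there is a first time $t_0\in(0,T_{\max})$ at which one of the strict inequalities fails. Since $t\mapsto u(t)$ is continuous into $H^1(\R)$ and both $P$ and $K_{\omega}$ are continuous functionals on $H^1(\R)$ (the only subtlety being the boundary term $|v(0)|^2$, which is controlled by the Sobolev embedding $H^1(\R)\hookrightarrow C(\R)$), the failing functional must equal zero at $t_0$ while remaining negative on $[0,t_0)$. The goal is to rule out both possibilities $P(u(t_0))=0$ and $K_{\omega}(u(t_0))=0$.

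The key step is to handle the case $P(u(t_0))=0$, and this is where Lemma \ref{lemma2} does the essential work. At $t_0$ we have $E(u(t_0))=E(u_0)>0$, $K_{\omega}(u(t_0))\le 0$, and $P(u(t_0))=0$. If in fact $K_{\omega}(u(t_0))<0$, then Lemma \ref{lemma2} yields $d(\omega)<S_{\omega}(u(t_0))=E(u(t_0))+\tfrac{\omega}{2}\|u(t_0)\|_{L^2}^2=E(u_0)+\tfrac{\omega}{2}\|\phi_{\omega}\|_{L^2}^2$. On the other hand, $E(u_0)<E(\phi_{\omega})$ and $S_{\omega}(\phi_{\omega})=E(\phi_{\omega})+\tfrac{\omega}{2}\|\phi_{\omega}\|_{L^2}^2=d(\omega)$, so $S_{\omega}(u(t_0))<d(\omega)$, a contradiction. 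Thus $K_{\omega}(u(t_0))<0$ cannot coexist with $P(u(t_0))=0$, which already shows the two inequalities cannot both be on the verge of failing at a point where $P=0$.

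The main obstacle I anticipate is organizing the two conditions so they reinforce each other rather than requiring separate treatment. The cleanest route is to show that $K_{\omega}(u(t))<0$ can never reach zero, so that $K_{\omega}$ stays strictly negative throughout, and then use Lemma \ref{lemma2} to force $P(u(t))\ne 0$ whenever $E>0$ and $K_{\omega}<0$. For the first part, if $K_{\omega}(u(t_0))=0$ at a first such time with $K_{\omega}<0$ on $[0,t_0)$, then $u(t_0)$ satisfies $K_{\omega}=0$ and $u(t_0)\ne 0$ (since $\|u(t_0)\|_{L^2}^2=\|\phi_{\omega}\|_{L^2}^2>0$), so $S_{\omega}(u(t_0))\ge d(\omega)$ by the definition \eqref{gs1}; but as above $S_{\omega}(u(t_0))=E(u_0)+\tfrac{\omega}{2}\|\phi_{\omega}\|_{L^2}^2<E(\phi_{\omega})+\tfrac{\omega}{2}\|\phi_{\omega}\|_{L^2}^2=d(\omega)$, a contradiction. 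Hence $K_{\omega}(u(t))<0$ for all $t$. Finally, with $K_{\omega}(u(t))<0$ and $E(u(t))>0$ established for all $t$, Lemma \ref{lemma2} shows $P(u(t))\ne 0$ for all $t$; since $P(u_0)<0$ and $t\mapsto P(u(t))$ is continuous, we conclude $P(u(t))<0$ for all $t\in[0,T_{\max})$. All four conditions are therefore preserved, and $u(t)\in\mathcal{B}_{\omega}$.
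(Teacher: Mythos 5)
Your proof is correct and follows essentially the same route as the paper: conservation of mass and energy handles the first two conditions, the variational characterization \eqref{gs1} (together with $S_{\omega}(\phi_{\omega})=d(\omega)$ and the strict inequality $E(u_0)<E(\phi_{\omega})$) rules out $K_{\omega}$ ever reaching zero, and Lemma \ref{lemma2} then rules out $P$ reaching zero. The only differences are cosmetic: you spell out the continuity of $P$ and $K_{\omega}$ via the embedding $H^1(\R)\hookrightarrow C(\R)$, which the paper leaves implicit.
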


\begin{proof}
Let $u_0\in \mathcal{B}_{\omega}$ and 
let $u(t)$ be the solution of \eqref{nls} with $u(0)=u_0$. 
Then, by the conservation laws \eqref{conservation}, 
we have 
$$0<E(u(t))=E(u_0)<E(\phi_{\omega}), \quad 
\|u(t)\|_{L^2}^2=\|u_0\|_{L^2}^2=\|\phi_{\omega}\|_{L^2}^2$$
for all $t\in [0,T_{\max})$. 

Next, we prove that $K_{\omega}(u(t))<0$ for all $t\in [0,T_{\max})$. 
Suppose that this were not true. 
Then, since $K_{\omega}(u_0)<0$ and 
$t\mapsto K_{\omega}(u(t))$ is continuous on $[0,T_{\max})$, 
there exists $t_1\in (0,T_{\max})$ such that 
$K_{\omega}(u(t_1))=0$. 
Moreover, since $u(t_1)\ne 0$, by \eqref{gs1}, we have 
$d(\omega)\le S_{\omega}(u(t_1))$. 
Thus, we have 
\begin{align*}
d(\omega)\le S_{\omega}(u(t_1))
=E(u_0)+\frac{\omega}{2}\|u_0\|_{L^2}^2
<E(\phi_{\omega})+\frac{\omega}{2}\|\phi_{\omega}\|_{L^2}^2=d(\omega). 
\end{align*}
This is a contradiction. 
Therefore, $K_{\omega}(u(t))<0$ for all $t\in [0,T_{\max})$. 

Finally, we prove that $P(u(t))<0$ for all $t\in [0,T_{\max})$. 
Suppose that this were not true. 
Then, there exists $t_2\in (0,T_{\max})$ such that $P(u(t_2))=0$. 
Since $E(u(t_2))>0$ and $K_{\omega}(u(t_2))<0$, 
it follows from Lemma \ref{lemma2} that $d(\omega)<S_{\omega}(u(t_2))$. 
Thus, we have 
\begin{align*}
d(\omega)<S_{\omega}(u(t_2))
=E(u_0)+\frac{\omega}{2}\|u_0\|_{L^2}^2
<E(\phi_{\omega})+\frac{\omega}{2}\|\phi_{\omega}\|_{L^2}^2=d(\omega). 
\end{align*}
This is a contradiction. 
Therefore, $P(u(t))<0$ for all $t\in [0,T_{\max})$. 
\end{proof}

\begin{lmm}\label{EP}
For any $v\in \mathcal{B}_{\omega}$, 
$$E(\phi_{\omega})\le E(v)-P(v).$$
\end{lmm}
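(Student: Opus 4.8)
The plan is to reduce the claim to a statement about the one-parameter scaling family $v^{\lambda}=\lambda^{1/2}v(\lambda\,\cdot\,)$. Since every $v\in\mathcal{B}_{\omega}$ satisfies $\|v\|_{L^2}^2=\|\phi_{\omega}\|_{L^2}^2$, and for the ground state $d(\omega)=S_{\omega}(\phi_{\omega})=E(\phi_{\omega})+\tfrac{\omega}{2}\|\phi_{\omega}\|_{L^2}^2$, the target inequality $E(\phi_{\omega})\le E(v)-P(v)$ is equivalent to $d(\omega)\le S_{\omega}(v)-P(v)$. I would prove it by inserting the maximizer $\lambda_3=\lambda_3(v)$ furnished by Lemma \ref{lemma1} (which, by \eqref{r2} and $\p_{\lambda}E(v^{\lambda})|_{\lambda_3}=0$, satisfies $P(v^{\lambda_3})=0$, together with $E(v^{\lambda})\le E(v^{\lambda_3})$ for all $\lambda>0$), and then establishing the two inequalities $E(\phi_{\omega})\le E(v^{\lambda_3})$ and $E(v^{\lambda_3})\le E(v)-P(v)$ separately.

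For the first inequality I would invoke the definition \eqref{gs1} of $d(\omega)$. Because $K_{\omega}(v)<0$ while $K_{\omega}(v^{\lambda})\to\omega\|v\|_{L^2}^2>0$ as $\lambda\to0^{+}$, the intermediate value theorem gives some $\lambda_0\in(0,1)$ with $K_{\omega}(v^{\lambda_0})=0$, so $d(\omega)\le S_{\omega}(v^{\lambda_0})=E(v^{\lambda_0})+\tfrac{\omega}{2}\|v\|_{L^2}^2$. Since $\lambda_3$ is the global maximizer of $\lambda\mapsto E(v^{\lambda})$ and the $L^2$ norm is scale-invariant, $E(v^{\lambda_0})\le E(v^{\lambda_3})$, and comparing with $d(\omega)=E(\phi_{\omega})+\tfrac{\omega}{2}\|\phi_{\omega}\|_{L^2}^2$ yields $E(\phi_{\omega})\le E(v^{\lambda_3})$. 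This step is routine once Lemma \ref{lemma1} is in hand, and notably it avoids having to determine the sign of $K_{\omega}(v^{\lambda_3})$.

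The second inequality is the crux. Writing $g(\lambda)=E(v^{\lambda})$, so that $g'(1)=P(v)<0$ and $g''(\lambda)=\|\p_x v\|_{L^2}^2-\tfrac{\alpha(\alpha-1)}{p+1}\lambda^{\alpha-2}\|v\|_{L^{p+1}}^{p+1}$ from \eqref{r1}, I would first note that $P(v)<0$ together with $E(v)>0$ forces the maximizer to lie to the left of $1$, i.e. $\lambda_3<1$ (this is exactly the configuration described in Lemma \ref{lemma1}, since $1$ must sit in the decreasing branch to the right of $\lambda_3$). Because $\alpha>2$, the function $g''$ is strictly decreasing on $(0,\infty)$, and $g''(\lambda_3)\le0$ as $\lambda_3$ is an interior maximum; hence $g'$ is non-increasing on $[\lambda_3,\infty)$ and $g'(\lambda)\ge g'(1)$ for all $\lambda\in[\lambda_3,1]$. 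Integrating then gives $g(\lambda_3)=g(1)-\int_{\lambda_3}^{1}g'(\lambda)\,d\lambda\le g(1)-(1-\lambda_3)g'(1)\le g(1)-g'(1)$, where the final step uses $g'(1)<0$ and $0<1-\lambda_3<1$. This is precisely $E(v^{\lambda_3})\le E(v)-P(v)$.

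The main obstacle is this convexity estimate: one must pin down $\lambda_3<1$ and then exploit the monotonicity of $g''$ (coming from $\alpha>2$, i.e. $p>5$) to bound the maximum value $g(\lambda_3)$ by the tangent value $g(1)-g'(1)$. Combining the two inequalities yields $E(\phi_{\omega})\le E(v^{\lambda_3})\le E(v)-P(v)$, as claimed. I expect this uniform bound to be exactly what is needed afterwards: for a solution $u(t)$ staying in $\mathcal{B}_{\omega}$ it forces $P(u(t))\le E(u_0)-E(\phi_{\omega})<0$, which together with the virial identity \eqref{virial} drives the finite-time blowup in Theorem \ref{thm2}.
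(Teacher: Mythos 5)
Your proof is correct and follows essentially the same route as the paper: the same intermediate-value argument producing $\lambda_0\in(0,1)$ with $K_{\omega}(v^{\lambda_0})=0$ to get $E(\phi_{\omega})\le E(v^{\lambda_0})\le E(v^{\lambda_3})$, and the same concavity bound on $[\lambda_3,\infty)$ (your integration of $g'\ge g'(1)$ is just the paper's Taylor expansion $E(v^{\lambda_3})\le E(v)+(\lambda_3-1)P(v)\le E(v)-P(v)$ written out). If anything, you are slightly more careful than the paper in justifying the concavity, deducing $g''<0$ on $(\lambda_3,\infty)$ from the strict monotonicity of $g''$ (using $\alpha>2$) together with $g''(\lambda_3)\le 0$ at the interior maximum.
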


\begin{proof}
Since $K_{\omega}(v)<0$, as in the proof of Lemma \ref{lemma2}, 
there exists $\lambda_0\in (0,1)$ such that 
$S_{\omega}(\phi_{\omega})=d(\omega)\le S_{\omega}(v^{\lambda_0})$. 
Moreover, since $\|v^{\lambda_0}\|_{L^2}^2
=\|v\|_{L^2}^2=\|\phi_{\omega}\|_{L^2}^2$, 
we have 
\begin{equation}\label{s1}
E(\phi_{\omega})\le E(v^{\lambda_0}).
\end{equation}

On the other hand, 
since $P(v^{\lambda})=\lambda \p_{\lambda} E(v^{\lambda})$, 
$P(v)<0$ and $E(v)>0$, it follows from Lemma \ref{lemma1} that 
$\lambda_3<1<\lambda_4$. 
Moreover, since $\p_{\lambda}^2 E(v^{\lambda})<0$ for $\lambda\in [\lambda_3,\infty)$, 
by a Taylor expansion, we have 
\begin{equation}\label{s2}
E(v^{\lambda_3}) \le E(v)+(\lambda_3-1) P(v) \le E(v)-P(v).
\end{equation}

Finally, by \eqref{s1}, \eqref{s2} and the third property of Lemma \ref{lemma1}, 
we have 
\begin{align*}
E(\phi_{\omega})\le 
E(v^{\lambda_0})\le E(v^{\lambda_3}) \le E(v)-P(v).
\end{align*}
This completes the proof. 
\end{proof}

Now we give the proof of Theorem \ref{thm2}. 

\begin{proof}[Proof of Theorem \ref{thm2}]
Let $u_0\in \Sigma \cap \mathcal{B}_{\omega}$ 
and let $u(t)$ be the solution of \eqref{nls} with $u(0)=u_0$. 
Then, by Lemma \ref{invariant}, 
$u(t)\in \mathcal{B}_{\omega}$ for all $t\in [0,T_{\max})$. 

Moreover, by Lemma \ref{EP} and the virial identity, we have 
\begin{align*}
&P(u(t))\le E(u(t))-E(\phi_{\omega})
 = E(u_0)-E(\phi_{\omega}), \\
&\frac{d^2}{dt^2}\|xu(t)\|_{L^2}^2
=8P(u(t))\le 8\{E(u_0)-E(\phi_{\omega})\}
\end{align*}
for all $t\in [0,T_{\max})$. 
Since $E(u_0)<E(\phi_{\omega})$, this implies $T_{\max}<\infty$. 
This completes the proof. 
\end{proof}

\section{Proof of Theorem \ref{thm1}}

First, we prove the following lemma.

\begin{lmm}\label{lemma5}
Let $\gamma>0$, $p>5$ and $\omega>\gamma^2/4$. 
Let $\omega_1(p,\gamma)$ be the number defined in Theorem \ref{thm1}. 
Then, $E(\phi_{\omega})>0$ if and only if $\omega>\omega_1(p,\gamma)$. 
\end{lmm}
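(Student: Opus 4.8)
The plan is to evaluate $E(\phi_{\omega})$ explicitly and reduce the sign condition $E(\phi_{\omega})>0$ to the inequality that defines $\xi_1(p)$ in \eqref{xi1}. The first step is a simplification that avoids computing the gradient and delta terms directly. Since $\phi_{\omega}$ solves \eqref{sp}, the Nehari identity $K_{\omega}(\phi_{\omega})=0$ gives $\|\p_x\phi_{\omega}\|_{L^2}^2=\|\phi_{\omega}\|_{L^{p+1}}^{p+1}+\gamma|\phi_{\omega}(0)|^2-\omega\|\phi_{\omega}\|_{L^2}^2$. Substituting this into the definition of $E$ to eliminate both the gradient term and the delta term yields
\[
E(\phi_{\omega})=\frac{p-1}{2(p+1)}\|\phi_{\omega}\|_{L^{p+1}}^{p+1}-\frac{\omega}{2}\|\phi_{\omega}\|_{L^2}^2,
\]
so the sign of $E(\phi_{\omega})$ is governed only by the two norms already computed in Remark \ref{rem1}.

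Next I would evaluate these norms via \eqref{for} and the integral identity \eqref{CI}, exactly as in Remark \ref{rem1}. Writing $\xi=\xi(\omega,\gamma)=\gamma/(2\sqrt{\omega})$ and setting
\[
I_1=\int_{\xi}^{1}(1-s^2)^{\frac{2}{p-1}-1}\,ds,\qquad
I_2=\int_{\xi}^{1}(1-s^2)^{\frac{2}{p-1}}\,ds,
\]
a common positive prefactor $\kappa(\omega)=\frac{4}{(p-1)\sqrt{\omega}}\left(\frac{(p+1)\omega}{2}\right)^{\frac{2}{p-1}}$ appears in both norms, with $\|\phi_{\omega}\|_{L^2}^2=\kappa I_1$ and $\|\phi_{\omega}\|_{L^{p+1}}^{p+1}=\kappa\cdot\frac{(p+1)\omega}{2}\,I_2$, the extra factor arising from the exponent difference $\frac{p+1}{p-1}-\frac{2}{p-1}=1$. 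After factoring out the positive quantity $\kappa\omega/2$, this gives the equivalence $E(\phi_{\omega})>0\iff\frac{p-1}{2}I_2>I_1$.

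The step I expect to be the \emph{crux} is converting $\frac{p-1}{2}I_2>I_1$ into the form of \eqref{xi1}. The link is an integration-by-parts identity: writing $s^2=1-(1-s^2)$ inside $I_1$ and integrating $I_2$ by parts yields
\[
\Big(1+\tfrac{4}{p-1}\Big)I_2=\tfrac{4}{p-1}I_1-\xi(1-\xi^2)^{\frac{2}{p-1}}.
\]
Solving for $I_1$ and substituting turns the inequality into
\[
\frac{p-5}{p-1}\int_{\xi}^{1}(1-s^2)^{\frac{2}{p-1}}\,ds>\xi\,(1-\xi^2)^{\frac{2}{p-1}}.
\]
Denoting by $G(\xi)$ the difference of the two sides, the problem reduces to a sign analysis of $G$ on $(0,1)$.

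Finally, I would show $G(\xi)>0\iff\xi<\xi_1(p)$. A direct differentiation gives
\[
G'(\xi)=\frac{2}{p-1}\,(1-\xi^2)^{\frac{2}{p-1}-1}\big[(p-1)\xi^2-(p-3)\big],
\]
so $G$ is strictly decreasing on $(0,\xi_*)$ and strictly increasing on $(\xi_*,1)$, where $\xi_*=\sqrt{(p-3)/(p-1)}\in(0,1)$. Combined with $G(0)>0$ (since $p>5$) and $G(1)=0$, this forces $G$ to have a unique zero $\xi_1\in(0,\xi_*)$, with $G>0$ on $(0,\xi_1)$ and $G<0$ on $(\xi_1,1)$; this also justifies the uniqueness of $\xi_1$ asserted in Theorem \ref{thm1}. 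Since $\xi(\omega,\gamma)$ is strictly decreasing in $\omega$ with $\xi(\omega_1,\gamma)=\xi_1$ by the definition $\omega_1=\gamma^2/[4\xi_1^2]$, we have $\xi<\xi_1\iff\omega>\omega_1$, which completes the equivalence $E(\phi_{\omega})>0\iff\omega>\omega_1$.
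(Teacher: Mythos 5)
Your proof is correct, but it reaches the paper's key inequality \eqref{Ep5} by a genuinely different algebraic route. The paper starts from the Pohozaev-type identity $P(\phi_{\omega})=0$ rather than the Nehari identity, which gives $E(\phi_{\omega})>0\iff\gamma|\phi_{\omega}(0)|^2<\frac{p-5}{p+1}\|\phi_{\omega}\|_{L^{p+1}}^{p+1}$; since the explicit formulas \eqref{RR1}--\eqref{RR2} evaluate both $|\phi_{\omega}(0)|^2$ and the $L^{p+1}$ norm, the inequality \eqref{Ep5} then drops out with no further manipulation, the boundary term $\xi\,(1-\xi^2)^{\frac{2}{p-1}}$ coming directly from $|\phi_{\omega}(0)|^2$. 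Your route via $K_{\omega}(\phi_{\omega})=0$ cancels the point-interaction term entirely, at the cost of bringing in $\|\phi_{\omega}\|_{L^2}^2$ (your $I_1$), so you must then invoke the integration-by-parts identity $\bigl(1+\tfrac{4}{p-1}\bigr)I_2=\tfrac{4}{p-1}I_1-\xi(1-\xi^2)^{\frac{2}{p-1}}$ to regenerate that boundary term and land on the same inequality; I verified this identity and the resulting reduction $\tfrac{p-1}{2}I_2>I_1\iff\tfrac{p-5}{p-1}I_2>\xi(1-\xi^2)^{\frac{2}{p-1}}$, and both are correct. Where your write-up is in fact more complete than the paper's is the final step: the paper disposes of the uniqueness of $\xi_1(p)$ and of the equivalence \eqref{Ep5} $\iff\xi(\omega,\gamma)<\xi_1(p)$ with the phrase ``by elementary computations,'' whereas your sign analysis of $G$ --- the derivative formula $G'(\xi)=\tfrac{2}{p-1}(1-\xi^2)^{\frac{2}{p-1}-1}\bigl[(p-1)\xi^2-(p-3)\bigr]$, together with $G(0)>0$ and $G(1)=0$ --- actually proves both assertions, and in passing justifies the well-definedness of $\xi_1(p)$ claimed in Theorem \ref{thm1}. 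Both arguments conclude identically, using that $\xi(\omega,\gamma)=\gamma/(2\sqrt{\omega})$ is strictly decreasing in $\omega$ with $\xi(\omega_1,\gamma)=\xi_1(p)$.
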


\begin{proof}
Since $P(\phi_{\omega})=0$, 
we see that $E(\phi_{\omega})>0$ if and only if 
\begin{equation}\label{Epq}
\gamma |\phi_{\omega}(0)|^2
<\frac{p-5}{p+1}\|\phi_{\omega}\|_{L^{p+1}}^{p+1}. 
\end{equation}
Moreover, by \eqref{for} and \eqref{xi-omega}, we have 
\begin{align}
|\phi_{\omega}(0)|^2
&=\bigg\{
\frac{(p+1)\omega}{2}{\rm sech}^2 
\left(\tanh^{-1} \xi(\omega,\gamma) \right) 
\bigg\}^{\frac{2}{p-1}} 
\nonumber \\
&=\left(\frac{(p+1)\omega}{2}\, \left\{1-\xi(\omega,\gamma)^2\right\}\right)^{\frac{2}{p-1}}, 
\label{RR1}
\end{align}
\begin{align}
\|\phi_{\omega} \|_{L^{p+1}}^{p+1} 
&=2\int_{0}^{\infty} 
\bigg\{\frac{(p+1)\omega}{2} {\rm sech}^2
\left(\frac{(p-1)\sqrt{\omega}}{2}\,x+\tanh^{-1} \xi  (\omega,\gamma) \right) 
\bigg\}^{\frac{p+1}{p-1}}dx 
\nonumber \\
&=\frac{4}{(p-1)\sqrt{\omega}} 
\left(\frac{(p+1)\omega}{2}\right)^{\frac{p+1}{p-1}}
\int_{\tanh^{-1} \xi  (\omega,\gamma)}^{\infty} ({\rm sech}^2 y)^{\frac{p+1}{p-1}}\,dy 
\nonumber \\
&=\frac{4}{(p-1)\sqrt{\omega}} \left( \frac{(p+1)\omega}{2} \right)^{\frac{p+1}{p-1}}
\int_{\xi(\omega,\gamma)}^{1} (1-s^2)^{\frac{2}{p-1}} \, ds, 
\label{RR2}
\end{align}
where we used \eqref{CI}. 
Thus, we see that \eqref{Epq} is equivalent to 
\begin{equation}\label{Ep5}
\frac{p-5}{p-1} \int_{\xi(\omega,\gamma)}^{1}(1-s^2)^{\frac{2}{p-1}} \, ds
>\xi(\omega,\gamma) \{1-\xi(\omega,\gamma)^2\}^{\frac{2}{p-1}}.
\end{equation}
Moreover, by elementary compuations, we see that 
the equation \eqref{xi1} has a unique solution $\xi_1(p)$ for each $p>5$, 
and that \eqref{Ep5} is equivalent to $\xi(\omega,\gamma)<\xi_1(p)$. 
This completes the proof. 
\end{proof}

Now we give the proof of Theorem \ref{thm1}. 

\begin{proof}[Proof of Theorem \ref{thm1}]
Let $\omega\in (\omega_1,\infty)$. 
Then, by Lemma \ref{lemma5}, $E(\phi_{\omega})>0$. 

For $\lambda>0$, we consider the scaling 
$\phi_{\omega}^{\lambda}(x)=\lambda^{1/2}\phi_{\omega}(\lambda x)$, 
and prove that there exists $\lambda_0\in (1,\infty)$ such that 
$\phi_{\omega}^{\lambda}\in \mathcal{B}_{\omega}$ 
for all $\lambda\in (1,\lambda_0)$. 

First, we have $\|\phi_{\omega}^{\lambda}\|_{L^2}^2=\|\phi_{\omega}\|_{L^2}^2$ 
for all $\lambda>0$. 
Next, since $P(\phi_{\omega})=0$ and $E(\phi_{\omega})>0$, 
by Lemma \ref{lemma1} and \eqref{r2},  
there exists $\lambda_4>1$ such that 
$$0<E(\phi_{\omega}^{\lambda})<E(\phi_{\omega}), \quad 
P(\phi_{\omega}^{\lambda})<0$$ 
for all $\lambda\in (1,\lambda_4)$. 
Finally, since $P(\phi_{\omega})=0$, we have 
\begin{align*}
\p_{\lambda} K_{\omega}(\phi_{\omega}^{\lambda})\big|_{\lambda=1}
=-\frac{(p-1)\alpha}{p+1}\|\phi_{\omega}\|_{L^{p+1}}^{p+1}<0.
\end{align*}
Since $K_{\omega}(\phi_{\omega})=0$, 
there exists $\lambda_0\in (1,\lambda_4)$ such that 
$K_{\omega}(\phi_{\omega}^{\lambda})<0$ for all $\lambda\in (1,\lambda_0)$. 

Therefore, $\phi_{\omega}^{\lambda}\in \mathcal{B}_{\omega}$ 
for all $\lambda\in (1,\lambda_0)$. 
Moreover, since $\phi_{\omega}^{\lambda}\in \Sigma$ for $\lambda>0$,  
it follows from Theorem \ref{thm2} that for any $\lambda\in (1,\lambda_0)$, 
the solution $u(t)$ of \eqref{nls} with $u(0)=\phi_{\omega}^{\lambda}$ blows up 
in finite time. 

Finally, since $\displaystyle{\lim_{\lambda\to 1}
\|\phi_{\omega}^{\lambda}-\phi_{\omega}\|_{H^1}=0}$, 
the proof is completed. 
\end{proof}

\section{Final Remarks}

In \cite{GR, oht2}, a sufficient condition on orbital instability of standing waves 
for some nonlinear Schr\"odinger equations is given. 
The sufficient condition by \cite{GR, oht2} 
is different from that given by Shatah and Strauss \cite{SS}, 
and it is applicable to \eqref{nls}. 

More precisely, by \cite{GR,oht2}, we see that if 
\begin{equation}\label{sc2}
\partial_{\lambda}^2E(\phi_{\omega}^{\lambda}) \big|_{\lambda=1}<0,
\end{equation}
then the standing wave solution $e^{i \omega t}\phi_{\omega}$ of \eqref{nls} is orbitally unstable, 
where $\phi_{\omega}^{\lambda}(x)=\lambda^{1/2}\phi_{\omega}(\lambda x)$. 

In this section, we compare the sufficient condition 
on strong instability $E(\phi_{\omega})>0$ in Theorem \ref{thm2} 
with sufficient conditions on orbital instability  $\partial_{\omega} \|\phi_{\omega}\|_{L^2}^2<0$ 
by \cite{SS} and \eqref{sc2} by \cite{GR, oht2}. 
 
By \eqref{r1}, we have 
\begin{equation}
\partial_{\lambda}^2E(\phi_{\omega}^{\lambda}) \big|_{\lambda=1}
=\|\partial_x \phi_{\omega}\|_{L^2}^{2}
-\frac{(p-1)(p-3)}{4(p+1)} \|\phi_{\omega}\|_{L^{p+1}}^{p+1}.
\end{equation}
Since $P(\phi_{\omega})=0$, we see that \eqref{sc2} is equivalent to 
\begin{align}\label{Epq2}
\gamma |\phi_{\omega}(0)|^2
<\frac{(p-1)(p-5)}{2(p+1)}\|\phi_{\omega}\|_{L^{p+1}}^{p+1}.
\end{align}
Moreover, by \eqref{RR1} and \eqref{RR2}, 
we see that \eqref{Epq2} is equivalent to 
\begin{equation}\label{Ep7}
\frac{p-5}{2} \int_{\xi(\omega,\gamma)}^{1}(1-s^2)^{\frac{2}{p-1}} \, ds
>\xi(\omega,\gamma) \{1-\xi(\omega,\gamma)^2\}^{\frac{2}{p-1}}.
\end{equation}
For $p>5$, let $\xi_2(p)\in (0,1)$ be a unique solution of 
\begin{equation}\label{xi2}
\frac{p-5}{2} \int_{\xi}^{1}(1-s^2)^{\frac{2}{p-1}} \, ds
=\xi\, (1-\xi^2)^{\frac{2}{p-1}} \quad (0<\xi<1).
\end{equation}
Then, we see that \eqref{Ep7} is equivalent to 
$\xi (\omega,\gamma)<\xi_2(p)$, 
and that \eqref{sc2} holds if and only if $\omega>\omega_2(p,\gamma):=\gamma^2/[4\xi_2(p)^2]$. 

On the other hand, as we mentioned in Remark \ref{rem1}, 
the condition $\partial_{\omega} \|\phi_{\omega}\|_{L^2}^2<0$ 
holds if and only if $\omega>\omega_0(p,\gamma)$, 
while as we proved in Lemma \ref{lemma5}, 
the condition $E(\phi_{\omega})>0$ holds if and only if $\omega>\omega_1(p,\gamma)$. 
Recall that for $j=0,1,2$, 
$$\omega_j(p,\gamma)=\frac{\gamma^2}{4\xi_j(p)^2},$$
and $\xi_0(p)$, $\xi_1(p)$ and $\xi_2(p)$ 
are the unique solutions of \eqref{xi0}, \eqref{xi1} and \eqref{xi2},  respectively. 

For $p>5$, we see that $\xi_1(p)<\xi_2(p)<\xi_0(p)$, 
and that $\omega_0(p,\gamma)<\omega_2(p,\gamma)<\omega_1(p,\gamma)$. 
The graphs of the functions $\xi_0(p)$, $\xi_1(p)$ and $\xi_2(p)$ 
are given in Figure 2 for $5<p\le 10$ and in Figure 3 for $10\le p\le 30$. 

For $\omega\in [\omega_0(p,\gamma), \omega_1(p,\gamma)]$, 
the standing wave solution $e^{i\omega t} \phi_{\omega}$ of \eqref{nls} is orbitally unstable, 
but we do not known whether it is strongly unstable or not. 
However, it seems natural to conjecture that 
$e^{i\omega t} \phi_{\omega}$ is strongly unstable at least for $\omega>\omega_2(p,\gamma)$. 
Note that some numerical results are given 
in Section 6.1.3 of \cite{LFF} 
for the case where $p=6$, $\gamma=1$ and $\omega=4$. 
We remark that 
$$\xi_1 (6)=0.137\cdots<\xi(4,1)=0.25<\xi_2 (6)=0.279\cdots,$$ 
and that $\omega_2(6,1)<4<\omega_1(6,1)$ for this case. 

\vspace{5mm}
\centerline{\includegraphics[scale=0.6]{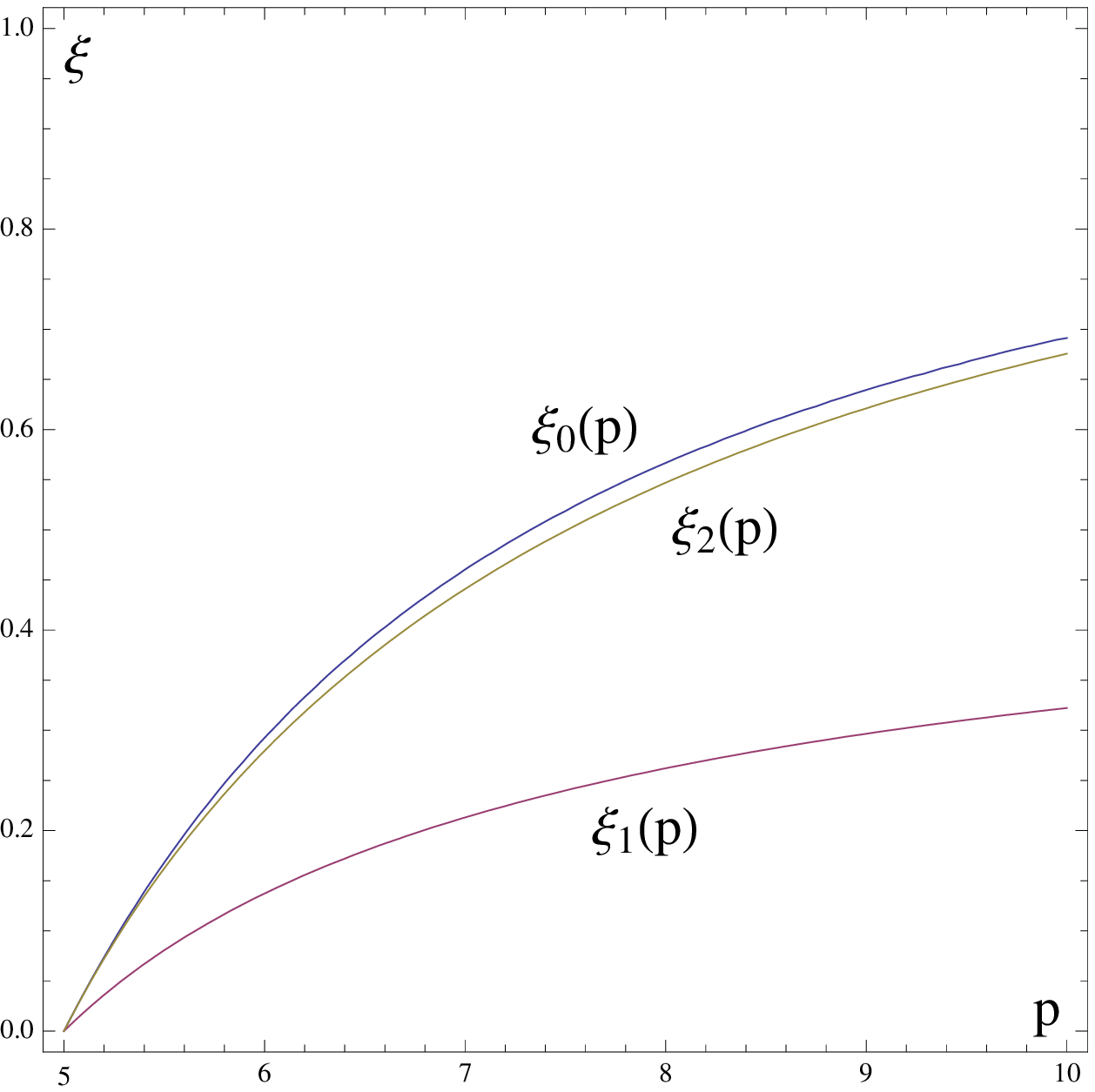}}
\begin{center}
Figure 2. \hspace{1mm} The graphs of $\xi_0(p)$, $\xi_1(p)$ and $\xi_2(p)$ for $5<p\le 10$. 
\end{center}

\centerline{\includegraphics[scale=0.6]{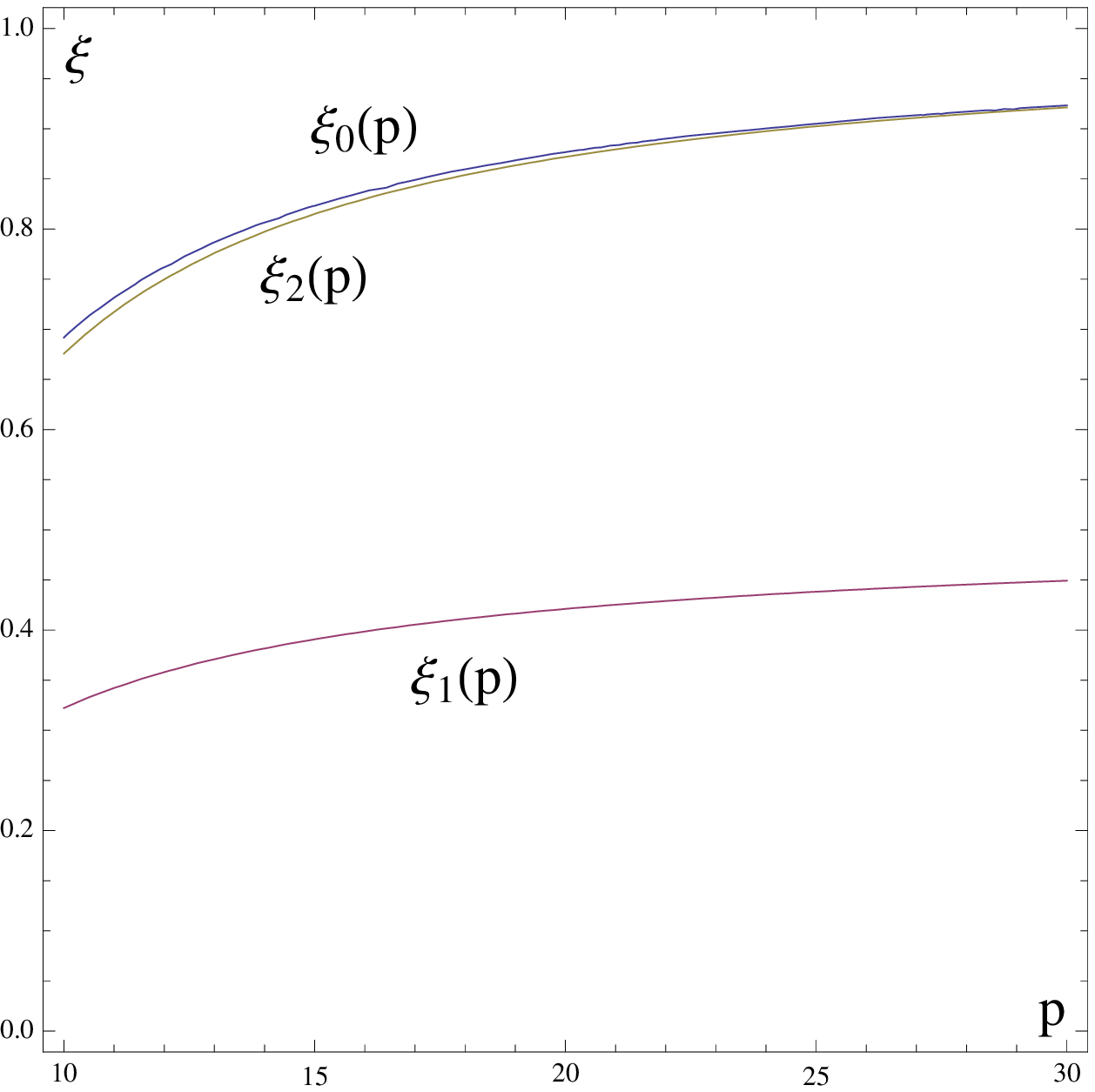}}
\begin{center}
Figure 3. \hspace{1mm} The graphs of $\xi_0(p)$, $\xi_1(p)$ and $\xi_2(p)$ for $10\le p\le 30$. 
\end{center}


\begin{thebibliography}{99}
%
%
\bibitem{BC}H. Berestycki and T. Cazenave, 
Instabilit\'{e} des \'{e}tats stationnaires dans les \'{e}quations 
de Schr\"{o}dinger et de Klein-Gordon non lin\'{e}aires, 
\textit{C. R. Acad. Sci. Paris S\'{e}r. I Math.}, 
\textbf{293} (1981), 489--492. 

\bibitem{caz}T. Cazenave,
Semilinear Schr\"odinger equations, 
\textit{Courant Lecture Notes in Mathematics} 10, 
Amer. Math. Soc., 2003. 

\bibitem{CL}T. Cazenave and P.-L. Lions,
Orbital stability of standing waves for 
some nonlinear Schr\"odinger equations, 
\textit{Comm. Math. Phys.}, 
\textbf{85} (1982), 549--561.

\bibitem{FJ}R. Fukuizumi, R. Jeanjean,
Stability of standing waves for a nonlinear Schr\"odinger equation 
with a repulsive Dirac delta potential, 
\textit{Discrete Contin. Dyn. Syst.}, 
\textbf{21} (2008), 121--136. 

\bibitem{FOO}R. Fukuizumi, M. Ohta and T. Ozawa, 
Nonlinear Schr\"odinger equation with a point defect, 
\textit{Ann. Inst. H. Poincar\'e, Anal. Non Lin\'eaire}, 
\textbf{25} (2008), 837--845.

\bibitem{GR}J.~M.~Gon\c{c}alves Ribeiro, 
Instability symmetric stationary states for some nonlinear Schr\"odinger equations 
with an external magnetic field, 
\textit{Ann. Inst. H. Poincar\'e, Phys. Th\'eor.}, 
\textbf{54} (1991), 403--433.

\bibitem{GHW}R. H. Goodman, P. J. Holmes, and M. I. Weinstein, 
Strong NLS soliton-defect interactions, 
\textit{Phys. D}, 
\textbf{192} (2004), 215--248.

\bibitem{GSS1}M. Grillakis, J. Shatah and W. Strauss, 
Stability theory of solitary waves in the presence of symmetry I, 
\textit{J. Funct. Anal.}, 
\textbf{74} (1987) 160--197. 

\bibitem{GSS2}M. Grillakis, J. Shatah and W. Strauss, 
Stability theory of solitary waves in the presence of symmetry II,  
\textit{J. Funct. Anal.}, 
\textbf{94} (1990), 308--348. 

\bibitem{HMZ1} J. Holmer, J. Marzuola and M. Zworski, 
Fast soliton scattering by delta impurities,
\textit{Comm. Math. Phys.}, 
\textbf{274} (2007), 187--216.

\bibitem{HMZ2} J. Holmer, J. Marzuola and M. Zworski, 
Soliton splitting by external delta potentials, 
\textit{J. Nonlinear Sci.}, 
\textbf{17} (2007), 349--367.

\bibitem{HZ} J. Holmer and M. Zworski, 
Slow soliton interaction with delta impurities,
\textit{J. Mod. Dyn.},  
\textbf{1} (2007), 689--718.

\bibitem{KO}M. Kaminaga and M. Ohta, 
Stability of standing waves for nonlinear Schr\"odinger equation 
with attractive delta potential and repulsive nonlinearity, 
\textit{Saitama Math. J.}, 
\textbf{26} (2009), 39--48. 

\bibitem{lec}S. Le Coz, 
A note on Berestycki-Cazenave's classical instability result 
for nonlinear Schr\"odinger equations, 
\textit{Adv. Nonlinear Stud.}, 
\textbf{8} (2008), 455--463. 

\bibitem{LFF}S. Le Coz, R. Fukuizumi, G. Fibich, B. Ksherim and Y. Sivan, 
Instability of bound states of a nonlinear Schr\"odinger equation with a Dirac potential, 
\textit{Phys. D}, 
\textbf{237} (2008), 1103--1128.

\bibitem{oht2}M. Ohta,
Instability of standing waves for the generalized Davey-Stewartson system,
\textit{Ann. Inst. H. Poincar\'e, Phys. Th\'eor.}, 
\textbf{62} (1995), 69--80.

\bibitem{oht4}M. Ohta, 
Instability of bound states for abstract nonlinear Schr\"odinger equations, 
\textit{J. Funct. Anal.}, 
\textbf{261} (2011), 90--110.

\bibitem{OY}M. Ohta and T. Yamaguchi, 
Strong instability of standing waves for nonlinear Schr\"odinger equations 
with double power nonlinearity, 
\textit{preprint}, arXiv:1407.0905. 

\bibitem{sha}J. Shatah, 
Stable standing waves of nonlinear Klein-Gordon equations, 
\textit{Comm. Math. Phys.}, 
\textbf{91} (1983), 313--327. 

\bibitem{SS}J. Shatah and W. Strauss,
Instability of nonlinear bound states, 
\textit{Comm. Math. Phys.}, 
\textbf{100} (1985), 173--190.

\bibitem{wei} M.I. Weinstein, 
Lyapunov stability of ground states of nonlinear dispersive evolution equations, 
\textit{Comm. Pure Appl. Math.}, 
\textbf{39} (1986), 51--67.

\bibitem{zhang}J. Zhang, 
Cross-constrained variational problem and nonlinear Schr\"{o}dinger equation, 
\textit{Foundations of computational mathematics (Hong Kong, 2000)}, 
World Sci. Publ., River Edge, NJ, 2002, pp. 457--469. 

\end{thebibliography}
\end{document}